\newcommand{\F}{{\mathbb{F}}}
\newcommand{\Z}{{\mathbb{Z}}}
\newcommand{\N}{{\mathbb{N}}}
\newcommand{\C}{{\mathbb{C}}}
\newcommand{\ba}{{\mathbf{a}}}
\newcommand{\bG}{{\mathbf{G}}}
\newcommand{\bB}{{\mathbf{B}}}
\newcommand{\bL}{{\mathbf{L}}}
\newcommand{\bT}{{\mathbf{T}}}
\newcommand{\bU}{{\mathbf{U}}}
\newcommand{\cH}{{\mathcal{H}}}
\newcommand{\cP}{{\mathcal{P}}}
\newcommand{\cO}{{\mathcal{O}}}
\newcommand{\cS}{{\mathcal{S}}}
\newcommand{\cU}{{\mathcal{U}}}
\newcommand{\cPR}{{\mathcal{PR}}}
\newcommand{\fF}{{\mathfrak{F}}}
\newcommand{\fS}{{\mathfrak{S}}}
\newcommand{\fe}{{\mathfrak{e}}}
\newcommand{\fp}{{\mathfrak{p}}}
\newcommand{\fb}{{\underline{\mathfrak{b}}}}
\newcommand{\fu}{{\underline{\mathfrak{u}}}}
\newcommand\Rst{{^*\!R}}
\newcommand{\Fix}{{\operatorname{Fix}}}
\newcommand{\Irr}{{\operatorname{Irr}}}
\newcommand{\Hom}{{\operatorname{Hom}}}
\newcommand{\Ind}{{\operatorname{Ind}}}
\newcommand{\End}{{\operatorname{End}}}
\newcommand{\rad}{{\operatorname{rad}}}
\newcommand{\St}{{\operatorname{St}}}
\newcommand{\GU}{{\operatorname{GU}}}
\newcommand{\GL}{{\operatorname{GL}}}
\newcommand{\charak}{{\operatorname{char}}}
\renewcommand{\leq}{\leqslant}
\renewcommand{\geq}{\geqslant}
\newtheorem{thm}{Theorem}[section]
\newtheorem{lem}[thm]{Lemma}
\newtheorem{cor}[thm]{Corollary}
\newtheorem{prop}[thm]{Proposition}
\theoremstyle{definition}
\newtheorem{exmp}[thm]{Example}
\newtheorem{abs}[thm]{}
\theoremstyle{remark}
\newtheorem{rem}[thm]{Remark}
\begin{document}
%
%
%

\makeatletter
\renewenvironment{proof}[1][Proof]{\par
  \normalfont \topsep6\p@\@plus6\p@\relax \trivlist
  \item[\hskip\labelsep \itshape #1\@addpunct{.}]
\ignorespaces}{\endtrivlist\@endpefalse}
\makeatother

\title{On the $\ell$-modular composition factors of the Steinberg 
representation}

\author{Meinolf Geck}
\address{IAZ---Lehrstuhl f\"ur Algebra, Universit\"at Stuttgart, 
Paffenwaldring 57, 70569 Stutt\-gart, Germany}
\email{meinolf.geck@mathematik.uni-stuttgart.de}
\subjclass[2000]{Primary 20C33, Secondary 20C20}

\begin{abstract} Let $G$ be a finite group of Lie type and $\St_k$ be 
the Steinberg representation of $G$, defined over a field $k$. We are 
interested in the case where $k$ has prime characteristic~$\ell$ 
and $\St_k$ is reducible. Tinberg has shown that the socle of $\St_k$ is 
always simple.  We give a new proof of this result in terms of the Hecke
algebra of $G$ with respect to a Borel subgroup and show how to identify 
the simple socle of $\St_k$ among the principal series representations 
of~$G$. Furthermore, we determine the composition length of $\St_k$ when 
$G=\GL_n(q)$ or $G$ is a finite classical group and $\ell$ is a so-called 
linear prime.
\end{abstract}
\maketitle

\section{Introduction} \label{sec0}
Let $G$ be a finite group of Lie type and $\St_k$ be the Steinberg 
representation of $G$, defined over a field $k$. Steinberg 
\cite{St57} showed that $\St_k$ is irreducible if and only if $[G:B]1_k
\neq 0$ where $B$ is a Borel subgroup of $G$. We shall be concerned here 
with the case where $\St_k$ is reducible. There is only very little general 
knowledge about the structure of $\St_k$ in this case. We mention the 
works of Tinberg \cite{tin1} (on the socle of $\St_k)$, Hiss \cite{hiss0} 
and Khammash \cite{kham} (on trivial composition factors of $\St_k$) and 
Gow \cite{gow} (on the Jantzen filtration of $\St_k$). 

One of the most 
important open questions in this respect seems to be to find a suitable 
bound on the length of a composition series of $\St_k$. Typically, this 
problem is related to quite subtle information about decomposition 
numbers; see, for example, Landrock--Michler \cite{LaMi} and 
Okuyama--Waki \cite{ow2} where this is solved for groups with a $BN$-pair 
of rank~$1$. For groups of larger $BN$-rank, this problem is completely open.

In this paper, we discuss two aspects of this problem. 

Firstly, Tinberg \cite{tin1} has shown that the socle of $\St_k$ is always 
simple, using results of Green \cite{Green2} applied to the endomorphism 
algebra of the permutation module $k[G/U]$ where $U$ is a maximal 
unipotent subgroup. After some preparations in Sections~\ref{sec2},
we show in Section~\ref{sec2a} that a similar argument works with $U$
replaced by $B$. Since the corresponding 
endomorphism algebra (or ``Hecke algebra'') is much easier to describe 
and its representation theory is quite well understood, this provides 
new additional information. For example, if $G=\GL_n(q)$, then we can 
identify the partition of $n$ which labels the socle of $\St_k$ in 
James' \cite{Jam1} parametrisation of the unipotent simple modules of $G$;
see Example~\ref{socgln}. Quite remarkably, this involves a particular
case of the ``Mullineux involution''~---~and an analogue of this involution
for other types of groups.

In another direction, we consider the partition of the simple $kG$-modules 
into Harish-Chandra series, as defined by Hiss \cite{hiss2}. Dipper and 
Gruber \cite{digr} have developed a quite general framework for this purpose,
in terms of so-called ``projective restriction systems''. In 
Section~\ref{sec2b}, we shall present a simplified, self-contained version 
of parts of this framework which is tailored towards applications to $\St_k$.
This yields, first of all, new proofs of some of the results of Szechtman 
\cite{Sz1} on $\St_k$ for $G=\GL_n(q)$; moreover, in Example~\ref{stgln}, 
we obtain an explicit formula for the composition length of $\St_k$ in this 
case. Analogous results are derived for groups of classical type in the 
so-called ``linear prime'' case, based on \cite{lymgh}, \cite{grub}, 
\cite{grhi}. For example, $\St_k$ is seen to be multiplicity-free with a 
unique simple quotient in these cases~---~a property which does not hold in 
general for non-linear primes.

\section{The Steinberg module and the Hecke algebra} \label{sec2}

Let $G$ be a finite group and $B,N\subseteq G$ be subgroups which satisfy 
the axioms for an ``algebraic group with a split $BN$-pair'' in 
\cite[\S 2.5]{Ca2}. We just recall explicitly those properties of $G$ which 
will be important for us in the sequel. Firstly, there is a prime number
$p$ such that we have a semidirect product decomposition $B=U\rtimes H$ 
where $H=B\cap N$ is an abelian group of order prime to $p$ and $U$ is a 
normal $p$-subgroup of $B$. The group $H$ is normal in $N$ and $W=N/H$ is 
a finite Coxeter group with a canonically defined generating set $S$; let 
$l\colon W\rightarrow \N_0$ be the corresponding length function. For each 
$w\in W$, let $n_w\in N$ be such that $Hn_w=w$. Then we have the Bruhat 
decomposition
\[G=\coprod_{w\in W} Bn_wB=\coprod_{w\in W} Bn_wU,\]
where the second equality holds since $B=U\rtimes H$ and $H$ is normal in 
$N$.

Next, there is a refinement of the above decomposition. Let $w_0\in W$ be 
the unique element of maximal length; we have $w_0^2=1$. Let $n_0\in N$ be 
a representative of $w_0$ and $V:=n_0^{-1}Un_0$; then $U\cap V=H$. For 
$w\in W$, let $U_w:=U\cap n_w^{-1}Vn_w$. (Note that $V$, $U_w$ do not 
depend on the choice of $n_0$, $n_w$ since $U$ is normalised by $H$.) Then 
we have the following sharp form of the Bruhat decomposition:
\[G=\coprod_{w\in W} Bn_wU_w,\qquad \mbox{with uniqueness of expressions},\]
that is, every $g\in Bn_wB$ can be uniquely written as $g=bn_wu$ where $
b\in B$ and $u\in U_w$. It will occasionally be useful to have a version
of this where the order of factors is reversed: By inverting elements, we 
obtain 
\[G=\coprod_{w\in W} U_{w^{-1}}n_wB,\qquad \mbox{with uniqueness of 
expressions}.\]

Now let $R$ be a commutative ring (with identity $1_R$) and $RG$ be the 
group algebra of $G$ over $R$. All our $RG$-modules will be left modules
and, usually, finitely generated. For any subgroup $X\subseteq G$, we 
denote by $R_X$ the trivial $RX$-module. Let $\fb:=\sum_{b\in B} b\in RG$. 
Then $RG\fb$ is an $RG$-module which is canonically isomorphic to the 
induced module $\Ind_B^G(R_B)$. In fact, this realization of $\Ind_B^G(R_B)$
will be particularly suited for our purposes, as we shall see below 
when we consider its endomorphism algebra.

\begin{thm}[Steinberg \protect{\cite{St57}}] \label{stein} 
Consider the $RG$-submodule 
\[\St_R:=RG\fe\subseteq RG\fb\qquad \mbox{where}\qquad \fe:=
\sum_{w\in W} (-1)^{l(w)}n_w\fb.\]
\begin{itemize}
\item[{\rm (i)}] The set $\{u\fe\mid u\in U\}$ is an $R$-basis of $\St_k$. 
Thus, $\St_R$ is free over $R$ of rank $|U|$.
\item[{\rm (ii)}] Assume that $R$ is a field. Then $\St_R$ is an (absolutely) 
irreducible $RG$-module  if and only if $[G:B]1_R\neq 0$.
\end{itemize}
\end{thm}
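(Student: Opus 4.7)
My plan attacks the two assertions of the theorem separately.

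For (i), I first aim at linear independence. The sharp Bruhat decomposition $G=\coprod_w U_{w^{-1}}n_wB$ equips $RG\fb$ with the $R$-basis $\{u'n_w\fb\mid w\in W,\;u'\in U_{w^{-1}}\}$, and because $w_0^2=1$ one has $U_{w_0^{-1}}=U_{w_0}=U$. Hence in the expansion
\[u\fe=\sum_{w\in W}(-1)^{l(w)}\,u\,n_w\,\fb,\]
the $w=w_0$ contribution $(-1)^{l(w_0)}u\,n_{w_0}\,\fb$ is itself a basis element; moreover $un_{w_0}B=u'n_{w_0}B$ for $u,u'\in U$ forces $u^{-1}u'\in n_{w_0}Bn_{w_0}^{-1}\cap U=VH\cap U=\{1\}$, so $u=u'$. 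This produces a $\pm I$ block in the coordinate matrix, proving linear independence of $\{u\fe\mid u\in U\}$.

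For the spanning assertion, let $M:=\sum_{u\in U}R(u\fe)\subseteq\St_R$; since $\fe\in M$ and $\St_R=RG\fe$, it suffices to check that $M$ is $G$-stable. The key preliminary identities are $h\fe=\fe$ for $h\in H$ (using $n_whn_w^{-1}\in H$ to reindex, together with $h\fb=\fb$) and $n_s\fe=-\fe$ for $s\in S$ (reindex $w\mapsto sw$ and use $l(sw)=l(w)\pm1$), whence $n_w\fe=(-1)^{l(w)}\fe$. Stability of $M$ under $B$ follows on writing $b=hv$: $b(u\fe)=(hvuh^{-1})\fe\in M$. Stability under $n_s$ reduces via $n_s(u\fe)=-(n_sun_s^{-1})\fe$ to showing $(n_sun_s^{-1})\fe\in M$. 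The decomposition $U=U_s^+U_s^-$, with $U_s^+=U\cap n_s^{-1}Un_s$ and $U_s^-=U_s$, sends the ``positive'' factor into $U$ directly; the ``negative'' factor produces an element of $V$, and I would treat this case by descending to the rank-one parabolic $P_s=\langle B,n_s\rangle$ where the Bruhat picture collapses to two cells and the required identity can be verified by hand and lifted back.

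For (ii), my plan is an idempotent-based argument. The easy identity $\fe\fb=|B|\fe$ follows from $\fb^2=|B|\fb$ together with right-$B$-invariance $\fe b=\fe$. Expanding $\fe^2$ by means of $\fb n_w\fb=|B\cap n_w^{-1}Bn_w|\cdot\sum_{g\in Bn_wB}g$ and the Poincar\'e identity $[G:B]=\sum_w[Bn_wB:B]$ yields a scalar relation $\fe^2=\lambda\fe$ with $\lambda\in R$ proportional to $[G:B]\cdot1_R$. When $[G:B]\cdot1_R\neq0$ and $R$ is a field, $\lambda$ is invertible, $e:=\lambda^{-1}\fe$ is an idempotent with $\St_R=RGe$, and a further computation with right-$B$-invariance of $\fe$ shows $\fe(RG)\fe=R\fe$, so $\End_{RG}(\St_R)\cong R$; combined with the observation that $\St_R$ has a unique $B$-fixed line (traced via the Hecke-algebra description), Schur's lemma then gives absolute irreducibility. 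Conversely, if $[G:B]\cdot1_R=0$, a direct computation shows that $\fu\fe=\sum_{u\in U}u\fe\in\St_R$ is proportional (up to sign) to the $G$-invariant element $\sum_{gB\in G/B}[gB]\in R[G/B]$, so the trivial representation embeds into $\St_R$ and reducibility follows.

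The main obstacle I anticipate is the $n_s$-stability step in (i): a naive expansion of $(n_sun_s^{-1})\fe$ for $u\in U_s^-$ leads to a circular reduction inside $V$, so the descent into the rank-one parabolic $P_s$ must be set up carefully. A secondary obstacle is pinning down the exact scalar $\lambda$ in $\fe^2=\lambda\fe$ via the correct Poincar\'e-polynomial computation, since both halves of (ii) hinge on it.
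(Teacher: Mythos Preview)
The paper does not give its own proof of this theorem; it cites Steinberg \cite{St57} and merely notes that his axioms hold in the abstract split $BN$-pair setting. So your proposal has to be judged on its own.

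Your linear-independence argument in (i) is correct and is the standard one: projecting onto the $w_0$-layer of the sharp Bruhat basis singles out $u\,n_{w_0}\fb$ from $u\fe$, and these are distinct as $u$ runs over $U$.

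The spanning argument in (i) has a real gap exactly where you flag it. After reducing to $n_s u^-\fe$ with $u^-\in U_s\setminus\{1\}$, the rank-one Bruhat relation gives $n_s u^- n_s^{-1}=u_1'\,n_s\,b$ with $b\in B\cap L_s=U_sH$, so $b\fe=u_1''\fe$ for some $u_1''\in U_s$; you are thrown back to $n_s u_1''\fe$ with $u_1''\in U_s$, and the recursion does not terminate. ``Verified by hand in $P_s$'' hides precisely this loop. Steinberg avoids the issue by a different route: he first proves the scalar identity $\bigl(\sum_{w}(-1)^{l(w)}n_w\fu_1\bigr)\fe=[G:B]\,\fe$ (his Lemma~2; compare the proof of Corollary~\ref{othm2c} here) and derives the module structure from that. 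If you want to keep the $G$-stability approach, you need an additional ingredient---for instance an explicit $SL_2$-type computation controlling the map $u^-\mapsto u_1''$, or a rank argument over $\Z$ followed by base change.

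For the forward direction of (ii) your outline is essentially Steinberg's. A direct computation gives $\fb\fe=|H|\,\fu_1\fe$, whence $\fe^2=|H|\sum_w(-1)^{l(w)}n_w\fu_1\fe=|H|\,[G:B]\,\fe$ by Steinberg's Lemma~2; so the scalar $\lambda$ is $|H|\,[G:B]$, not just $[G:B]$.

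Your converse in (ii) is incorrect as stated. The element $\fu_1\fe$ is \emph{not} in general proportional to the $G$-invariant $\sum_{gB}g\fb$: expanding in the basis $\{u'n_w\fb\}$, the coefficient on the $w$-block is $(-1)^{l(w)}|U|/|U_{w^{-1}}|$, which is independent of $w$ only when $|U|\equiv(-1)^{l(w_0)}$ in $R$. For $G=\GL_3(q)$ with $\ell$ odd and $\ell\mid q^2{+}q{+}1$ one has $[G:B]\equiv 0$ but $|U|=q^3\equiv 1\neq(-1)^3$. Indeed, by Example~\ref{soctriv} of this paper the simple socle of $\St_k$ is trivial only when $q_s\equiv -1$ for every $s\in S$, so $k\,\fu_1\fe$ need not be $G$-fixed at all. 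You need a different device to produce a proper submodule when $[G:B]\,1_R=0$.
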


({\em Note about the proof:} Steinberg uses a list of $14$ axioms concerning
finite Chevalley groups and their twisted versions; all these axioms are 
known to hold in the abstract setting of ``algebraic groups with a split 
$BN$-pair''; see \cite[\S 2.5 and Prop.~2.6.1]{Ca2}.)

\medskip
When $R=k$ is a field, Tinberg \cite[Theorem~4.10]{tin1} determined the 
socle of $\St_k$ and showed that this is simple. An essential ingredient 
in Tinberg's proof are Green's results \cite{Green2} on the Hom functor, 
applied to the endomorphism algebra of the $kG$-module $kG\fu_1$, where 
$\fu_1=\sum_{u\in U} u$. There is a description of this algebra in terms of 
generators and relations, and this is used in order to study 
the indecomposable direct summands of $kG\fu_1$. Our aim is to show 
that an analogous argument works directly with the module $kG\fb$, whose 
endomorphism algebra has a much simpler description. 

So let again $R$ be any commutative ring (with $1_R$), and 
consider the Hecke algebra
\[ \cH_R=\cH_R(G,B):=\End_{RG}(RG\fb)^{\text{opp}}.\]
Following Green \cite{Green2}, a connection between (left) $RG$-modules 
and (left) $\cH_R$-modules is established through the Hom functor 
\[ \fF_R\colon RG\mbox{-modules} \rightarrow \cH_R\mbox{-modules}, 
\qquad M\mapsto \fF_R(M):=\Hom_{RG}(RG\fb,M),\]
where $\fF_R(M)$ is a left $\cH_R$-module via $\cH_R \times \fF_R(M) 
\rightarrow \fF_R(M)$, $(h,\alpha)\mapsto \alpha \circ h$. (See also 
\cite[\S 2.C]{my98} where this Hom functor is studied in a somewhat 
more general context.) Note that, by \cite[(1.3)]{Green2}, we have an 
isomorphism of $R$-modules 
\[\Fix_B(M):=\{m \in M\mid b.m=m \mbox{ for all $b\in B$}\}
\stackrel{\sim}{\longrightarrow} \fF_R(M),\]
which takes $m\in \Fix_B(M)$ to the map $\theta_m\colon 
RG\fb \rightarrow M$, $g\fb\mapsto gm$ ($g\in G$). 

Now, $\cH_R$ is free over $R$ with a standard basis $\{T_w\mid w \in W\}$, 
where the endomorphism $T_w\colon RG\fb\rightarrow RG\fb$ is given by 
\[ T_w(g\fb)=\sum_{g'B\in G/B \text{ with } g^{-1}g'\in Bn_wB} g'\fb
\qquad\qquad(g\in G).\] 
The multiplication is given as follows. Let $w\in W$, $s\in S$ and write 
$q_s:=|U_s|1_R$. Then 
\[T_sT_w=\left\{\begin{array}{cl} T_{sw} & \qquad \mbox{if $l(sw)>l(w)$},\\
q_sT_{sw}+(q_s-1)T_w & \qquad \mbox{if $l(sw)<l(w)$}. \end{array}\right.\]
(See \cite[\S 8.4]{gepf} for a proof and further details.) The crucial step
in our discussion consists of determining the $\cH_R$-module $\fF_R(\St_R)$. 
This will rely on the following basic identity, an analogous version of which 
was shown by Tinberg \cite[4.9]{tin1} for the action of the standard basis
elements of the endomorphism algebra of $kG\fu_1$ (where $k$ is a field). 

\begin{lem} \label{othm2b} We have $T_w(\fe)=(-1)^{l(w)}\fe$ for 
all $w\in W$.
\end{lem}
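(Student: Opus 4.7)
The plan is to induct on the length $l(w)$. The case $l(w)=0$ is immediate since $T_1=\mathrm{id}_{RG\fb}$. For the inductive step, given $w$ of length $\geqslant 1$, I choose $s\in S$ with $l(sw)<l(w)$ and set $w':=sw$, so $l(w')=l(w)-1$ and $l(sw')>l(w')$. The Hecke multiplication rule then gives $T_w=T_sT_{w'}$, and the inductive hypothesis applied to $w'$ shows that it suffices to prove the base case
\[T_s(\fe)=-\fe \qquad\text{for every } s\in S.\]

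For this base case, I would first use the inverted sharp Bruhat decomposition $G=\coprod_{v\in W}U_{v^{-1}}n_vB$, specialized to $v=s$, to read off $BsB=\bigsqcup_{u\in U_s}un_sB$. This yields $T_s(\fb)=\sum_{u\in U_s}un_s\fb$, and $RG$-linearity gives $T_s(n_w\fb)=n_wT_s(\fb)$ for every $w\in W$. Setting $\sigma:=\fb+T_s(\fb)$, the target identity $T_s(\fe)=-\fe$ is equivalent to
\[\sum_{w\in W}(-1)^{l(w)}\,n_w\sigma=0.\]

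The crucial intermediate step is the identity $n_s\sigma=\sigma$. Expanding $n_sT_s(\fb)=\sum_{u\in U_s}n_sun_s\fb$ and using $n_sun_s=(n_sun_s^{-1})\cdot n_s^{2}$ with $n_s^{2}\in H\subseteq B$, one rewrites $n_sT_s(\fb)=\sum_{v\in V_s}v\fb$, where $V_s:=n_sU_sn_s^{-1}$ and $u\mapsto v=n_sun_s^{-1}$ is a bijection $U_s\xrightarrow{\sim}V_s$. A rank-one analysis inside $\langle U_s,V_s\rangle$ shows that $v\mapsto vB$ gives a bijection $V_s\setminus\{1\}\xrightarrow{\sim}\{un_sB:u\in U_s\setminus\{1\}\}$ of cosets in $BsB/B$; peeling off the $v=1$ term yields $n_sT_s(\fb)=\fb+T_s(\fb)-n_s\fb$, i.e.\ $n_s\sigma=\sigma$. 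A direct computation also gives $h\sigma=\sigma$ for all $h\in H$, since $H$ normalizes $U_s$ and fixes $\fb$.

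To conclude, I pair each $w\in W$ with $ws$: writing $n_wn_s=n_{ws}h$ with $h\in H$ and using the two invariances of $\sigma$, one gets $n_w\sigma=n_{ws}\sigma$, while the signs $(-1)^{l(w)}$ and $(-1)^{l(ws)}$ are opposite, so the paired contributions cancel and the whole alternating sum vanishes. The main obstacle is the rank-one bijection $V_s\setminus\{1\}\xrightarrow{\sim}\{un_sB:u\in U_s\setminus\{1\}\}$: this is where the genuine $BN$-pair geometry (essentially the $\mathrm{SL}_2$-calculation underlying the quadratic relation $T_s^{2}=q_sT_1+(q_s-1)T_s$) enters, and once it is in hand the rest of the argument is formal bookkeeping.
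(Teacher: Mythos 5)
Your proof is correct, and it takes a route that is related to but genuinely different from the paper's. The paper computes $T_s(n_w\fb)$ directly from the definition via the sharp Bruhat decomposition (coset representatives $n_{ws}$ together with $n_wvn_s$ for $1\neq v\in U_s$), splits the result into a main term giving $-\fe$ and an error term $\sum_w(-1)^{l(w)}\kappa_w$, and kills the error term by showing $\kappa_w=\kappa_{ws}$ via Tinberg's fact that the map $v\mapsto v'$ coming from $n_s^{-1}vn_s=v'n_sb_v$ permutes $U_s\setminus\{1\}$. You instead use $RG$-linearity to reduce everything to the single vector $T_s(\fb)$, form $\sigma=\fb+T_s(\fb)$, show $n_s\sigma=\sigma$ and $h\sigma=\sigma$, and cancel the alternating sum by pairing $w\leftrightarrow ws$. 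The underlying rank-one fact is a close relative of the paper's, and your pairing also sidesteps the paper's detour through $\Z$-coefficients (which the paper invokes because it derives $\sum=-\sum$ rather than cancelling termwise).

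That said, the step you single out as the \emph{main obstacle} --- the rank-one bijection $V_s\setminus\{1\}\xrightarrow{\sim}\{un_sB:u\in U_s\setminus\{1\}\}$ --- is not actually needed. Directly from the definition of $T_s$ one has $T_s(\fb)=\sum g\fb$, the sum over all cosets $gB$ contained in $Bn_sB$, so $\sigma=\fb+T_s(\fb)$ is the sum of $g\fb$ over \emph{all} cosets $gB\subseteq P_s$. Left multiplication by any $g_0\in P_s$ permutes these cosets, so $g_0\sigma=\sigma$ for every $g_0\in P_s$; in particular $n_s\sigma=\sigma$ and $h\sigma=\sigma$ come for free, with no $\mathrm{SL}_2$-level computation. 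The only inputs are that $P_s=B\cup Bn_sB$ is a subgroup and the inverted sharp Bruhat decomposition you already invoke. Replacing your rank-one analysis by this observation makes the argument shorter and cleaner than the paper's.
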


\begin{proof} It is sufficient to show that $T_s(\fe)=-\fe$ for $s\in S$.
Now, by definition, we have
\[T_s(\fe)=\sum_{w\in W} (-1)^{l(w)}T_s(n_w\fb)=\sum_{w\in W} (-1)^{l(w)}
\sum_{gB} g\fb\]
where the second sum runs over all cosets $gB\in G/B$ such that $n_w^{-1}g
\in Bn_sB$. By the sharp form of the Bruhat decomposition, a set of
representatives for these cosets is given by $\{n_{ws}\}\cup 
\{n_wvn_s\mid 1\neq v\in U_s\}$. This yields 
\[T_s(\fe)=\sum_{w\in W} (-1)^{l(w)} n_{ws}\fb+\sum_{w\in W} 
\sum_{1\neq v \in U_s} (-1)^{l(w)}n_wvn_s\fb.\]
Since $l(ws)=l(w)+1$ for $w \in W$, the first sum equals $-\fe$. 
So it suffices to show that 
\[\sum_{w\in W} (-1)^{l(w)} \kappa_w=0 \qquad \mbox{where}
\qquad\kappa_w:=\sum_{1\neq v\in U_s} n_wvn_s \fb.\]
Let $1\neq v\in U_s$. Since $P_s=B\cup Bn_sB$ is a parabolic 
subgroup of $G$, we have $n_s^{-1}vn_s\in P_s$. By the sharp form of 
the Bruhat decomposition, $n_s^{-1}vn_s\not\in B$ and so $n_s^{-1}vn_s=
v'n_sb_v$ where $v'\in U_s$ and $b_v\in B$ are uniquely determined by~$v$. 
Hence, we have $n_wvn_s\fb=n_wn_sv'n_sb_v\fb=n_{ws}v'n_s\fb$ and so 
\[\kappa_w=\sum_{1\neq v\in U_s} n_wvn_s \fb=\sum_{1\neq v\in U_s} n_{ws}
v'n_s\fb=\sum_{1\neq v\in U_s} n_{ws}vn_s\fb=\kappa_{ws}, \]
where the third equality holds since, by \cite[2.1]{tin1}, the 
map $v\mapsto v'$ is a permutation of $U_s\setminus\{1\}$. 
Consequently, we have 
\[\sum_{w\in W}(-1)^{l(w)}\kappa_w=\sum_{w\in W}(-1)^{l(w)}\kappa_{ws}=
\sum_{w\in W} (-1)^{l(ws)}\kappa_w=-\sum_{w\in W} (-1)^{l(w)}\kappa_w.\]
We conclude that the identity $\sum_{w\in W}(-1)^{l(w)} \kappa_w=0$ holds
if $R=\Z$. For $R$ arbitrary, we apply the canonical map $\Z G\rightarrow 
RG$ and conclude that this identity remains valid in $RG$. (Such an 
argument was already used by Steinberg in the proof of 
\cite[Lemma~2]{St57}.) \qed
\end{proof}

\begin{cor} \label{othm2c} We have $\fF_R(\St_R)=\langle \theta_{\fu_1\fe}
\rangle_R$ and the action of $\cH_R$ on this $R$-module of rank~$1$ is 
given by the algebra homomorphism $\varepsilon\colon \cH_R\rightarrow R$, 
$T_w\mapsto (-1)^{l(w)}$. 
\end{cor}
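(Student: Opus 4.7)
The plan is to first compute $\Fix_B(\St_R)$ directly, then transport this computation to $\fF_R(\St_R)$, and finally identify the $\cH_R$-action.

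\emph{Step 1 (the fixed points).} By Theorem~\ref{stein}(i), $\St_R$ has $R$-basis $\{u\fe:u\in U\}$. A linear combination $\sum_u a_u u\fe$ is $U$-invariant iff all $a_u$ coincide, so any $U$-fixed element is a scalar multiple of $\fu_1\fe$. The element $\fu_1\fe$ is moreover genuinely $B$-fixed: for $h\in H$ one has $h\fu_1=\fu_1 h$ (since $H$ normalises $U$), and $h\fe=\fe$ because $hn_w=n_w\cdot(n_w^{-1}hn_w)$ with $n_w^{-1}hn_w\in H\subseteq B$, so $hn_w\fb=n_w\fb$. Hence $\Fix_B(\St_R)=R\fu_1\fe$ is free of rank $1$ over $R$, and under the isomorphism of the excerpt, $\fF_R(\St_R)=R\theta_{\fu_1\fe}$.

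\emph{Step 2 (translation to $\cH_R$).} Being free of rank $1$, $\fF_R(\St_R)$ is acted on through an $R$-algebra homomorphism $\chi\colon\cH_R\to R$, so it suffices to prove $\chi(T_s)=-1$ for every simple reflection $s\in S$. The inclusion $\St_R\hookrightarrow RG\fb$ induces a left-$\cH_R$-linear embedding $\fF_R(\St_R)\hookrightarrow \fF_R(RG\fb)\cong\cH_R$ (the left regular representation, under which $T_w(\fb)\leftrightarrow T_w$). A short orbit count using the sharp Bruhat decomposition shows that the $U$-stabilizer of $n_wB\in G/B$ equals $U\cap n_wUn_w^{-1}$, of order $|U|/|U_w|$ (using that $U$ is the Sylow $p$-subgroup of $B$); hence $\fu_1 n_w\fb = (|U|/|U_w|)\,T_w(\fb)$, and $\theta_{\fu_1\fe}$ corresponds to the element
\[
\sigma \;:=\; \sum_{w\in W}(-1)^{l(w)}\frac{|U|}{|U_w|}\,T_w \;\in\;\cH_R.
\]
The corollary thus reduces to proving $T_s\sigma = -\sigma$ in $\cH_R$.

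\emph{Step 3 (the $\cH_R$-identity).} This is the main obstacle. My preferred approach uses the canonical anti-automorphism $\tau\colon T_w\mapsto T_{w^{-1}}$ of $\cH_R$, which fixes $T_s$ (as $s=s^{-1}$) and also fixes $\sigma$ (its coefficients depend only on $l(w)$ and $|U_w|$, both invariant under $w\mapsto w^{-1}$). Lemma~\ref{othm2b} combined with the $RG$-linearity of the endomorphism $T_s\in\End_{RG}(RG\fb)$ gives $T_s(\fu_1\fe)=\fu_1\cdot T_s(\fe)=-\fu_1\fe$; under the bimodule isomorphism $\Fix_B(RG\fb)\cong\cH_R$, the endomorphism application of $T_s$ corresponds to right multiplication by $T_s$ in $\cH_R$, so this reads $\sigma T_s=-\sigma$; applying $\tau$ then yields $T_s\sigma=-\sigma$, completing the proof. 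Alternatively, $T_s\sigma=-\sigma$ can be verified by a direct case-by-case computation in $\cH_R$, splitting into $l(sy)>l(y)$ and $l(sy)<l(y)$, using the quadratic relation for $T_sT_w$ and the standard identity $|U_{sy}|=|U_s|\cdot|U_y|$ when $l(sy)=l(s)+l(y)$.
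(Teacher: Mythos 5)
Your proof is correct, but it takes a genuinely different route from the paper's. Step~1 is the same in both. For the key identity $\varepsilon(T_s)=-1$, the paper argues by evaluating $T_s.\theta_{\fu_1\fe}=\lambda\theta_{\fu_1\fe}$ at the element $\fe$ rather than $\fb$: Lemma~\ref{othm2b} gives $(\theta_{\fu_1\fe}\circ T_s)(\fe)=-\theta_{\fu_1\fe}(\fe)$, and Steinberg's Lemma~2 gives $\theta_{\fu_1\fe}(\fe)=[G:B]\fe$, which is nonzero over $\Z$; the identity $\lambda=-1$ then transfers to arbitrary $R$ by applying the canonical map $\Z G\to RG$ (the same specialisation trick used in the proof of Lemma~\ref{othm2b}). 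You instead stay entirely inside $\cH_R$ for a fixed $R$: you make the image of $\theta_{\fu_1\fe}$ explicit as the element $\sigma=\sum_w(-1)^{l(w)}(|U|/|U_w|)T_w$, observe that $RG$-linearity of $T_s$ together with Lemma~\ref{othm2b} gives the \emph{right}-multiplication identity $\sigma T_s=-\sigma$, and then convert this to the needed \emph{left}-multiplication identity $T_s\sigma=-\sigma$ via the anti-automorphism $\tau\colon T_w\mapsto T_{w^{-1}}$, using that $\sigma$ is $\tau$-fixed because $l(w)=l(w^{-1})$ and $|U_w|=|U_{w^{-1}}|$. Your route is somewhat longer and leans on more auxiliary standard facts (the Sylow argument to identify the $U$-stabilizer of $n_wB$, the index identity $|U\cap n_wUn_w^{-1}|=|U|/|U_w|$, the anti-automorphism $\tau$), but it avoids the base-change-to-$\Z$ device and has the small virtue of exhibiting the element $\sigma\in\cH_R$ explicitly, which the paper does not do. Both arguments are sound.
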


\begin{proof} Since $\{u\fe\mid u \in U\}$ is an $R$-basis of $\St_R$ and 
$H$ normalises $U$, we have $\Fix_B(\St_R)=\langle \fu_1\fe\rangle_R$ 
and so $\fF_R(\St_R)=\langle \theta_{\fu_1\fe} \rangle_R$. It remains to 
show that $T_s.\theta_{\fu_1\fe}=-\theta_{\fu_1\fe}$ for all $s\in S$. 
Since $\fF_R(\St_R)$ has rank~$1$, we have $T_s.\theta_{\fu_1\fe}=\lambda 
\theta_{\fu_1\fe}$ for some $\lambda \in R$. This implies that
\[\lambda \fu_1\fe=\lambda\theta_{\fu_1\fe}(\fb)=(T_s.\theta_{\fu_1\fe})
(\fb)=(\theta_{\fu_1\fe}\circ T_s)(\fb)=\sum_{gB\in G/B \text{ with } 
g\in Bn_sB} g\fu_1\fe.\]
Thus, the assertion that $\lambda=-1$ is equivalent to the following 
identity:
\begin{equation*}
\sum_{gB\in G/B \text{ with } g\in Bn_sB} g\fu_1\fe=-\fu_1\fe.\tag{$*$}
\end{equation*}
One can either work this out directly by an explicit computation (using 
the various ``structural equations'' in \cite{St57}, \cite{tin1}), or 
one can argue as follows. Lemma~\ref{othm2b} shows that 
\[\lambda\theta_{\fu_1\fe}(\fe)=(T_s.\theta_{\fu_1\fe})(\fe)=
(\theta_{\fu_1\fe}\circ T_s)(\fe)=- \theta_{\fu_1\fe}(\fe).\]
Furthermore, by Steinberg \cite[Lemma~2]{St57}, we have 
\[\theta_{\fu_1\fe}(\fe)=\sum_{w\in W} (-1)^{l(w)}n_w\fu_1\fe=\sum_{w\in W}
\sum_{u\in U} (-1)^{l(w)}n_wu\fe=[G:B]\fe.\]
Thus, if $R=\Z$, then $\theta_{\fu_1e}(\fe)\neq 0$; consequently, in this 
case, we do have $\lambda=-1$ and so ($*$) holds for $R=\Z$. As in the above 
proof, it follows that ($*$) holds for any $R$. \qed
\end{proof}

\begin{rem} \label{othm2a} Assume that $R$ is an integral domain and that 
we have a decomposition $RG\fb=M_1\oplus \cdots \oplus M_r$ where each 
$M_j$ is an indecomposable $RG$-module. Since $\{T_w\mid w\in W\}$ is an
$R$-basis of $\cH_R$, Lemma~\ref{othm2b} implies that every idempotent in
$\cH_R$ either acts as the identity on $\St_R$ or as~$0$. It easily 
follows that there is a unique $i$ such that $\St_R \subseteq M_i$. In 
analogy to Tinberg \cite[4.10]{tin1}, we call this $M_i$ the {\em Steinberg 
component} of $RG\fb$. 

As observed by Khammash \cite[(3.10)]{kham0}, the above argument actually
shows that 
\[ \St_R\subseteq \{m\in RG\fb\mid T_w(m)=(-1)^{l(w)}m \mbox{ for all
$w\in W$}\}\subseteq M_i.\]
Then Khammash \cite[Cor.~\S 3]{kham} proved that the first inequality 
always is an equality. 
\end{rem}

\begin{rem} \label{truegf} At some places in the discussion below, it
will be convenient or even necessary to assume that $G$ is a true finite 
group of Lie type, as in \cite[\S 1.18]{Ca2}. Thus, using the notation in
[{\em loc.\ cit.}], we have $G=\bG^F$ where $\bG$ is a connected reductive 
algebraic group $\bG$ over $\overline{\F}_p$ and 
$F\colon \bG \rightarrow \bG$ is an endomorphism such that some power of $F$ 
is a Frobenius map. Then the ingredients of the $BN$-pair in $G$ will also 
be derived from $\bG$: we have $B=\bB^F$ where $\bB$ is an $F$-stable Borel 
subgroup of $\bG$ and $H=\bT_0^F$ where $\bT_0$ is an $F$-stable maximal 
torus contained in $\bB$; furthermore, $N=N_{\bG}(\bT_0)^F$ and $U=\bU^F$ 
where $\bU$ is the unipotent radical of $\bB$. This set-up leads to the 
following two definitions.

(1) We define a real number $q>0$ by the condition that $|U|=q^{|\Phi|/2}$ 
where $\Phi$ is the root system of $\bG$ with respect to $\bT_0$. Then there 
are positive integers $c_s>0$ such that $|U_s|=q^{c_s}$ for all 
$s\in S$; see \cite[\S 2.9]{Ca2}. Consequently, the relations in $\cH_R$ 
read:
\[T_sT_w=\left\{\begin{array}{cl} T_{sw} & \qquad \mbox{if $l(sw)>l(w)$},\\
q^{c_s}T_{sw}+(q^{c_s}-1)T_w & \qquad \mbox{if $l(sw)<l(w)$}. 
\end{array}\right.\]

(2) The commutator subgroup $[\bU,\bU]$ is an $F$-stable closed
connected normal subgroup of $\bU$. We define the subgroup $U^*:=[\bU,\bU]^F
\subseteq U$. Then $[U,U]\subseteq U^*$. (In most cases, we have $U^*=[U,U]$
but there are exceptions when $q$ is very small; see the remarks in
\cite[p.~258]{lecst}.) The definition of $U^*$ will be needed in 
Section~\ref{sec2b}, where we shall consider group homomorphisms 
$\sigma\colon U \rightarrow R^\times$ such that $U^* \subseteq \ker(\sigma)$.
\end{rem}

\section{The socle of the Steinberg module} \label{sec2a}

We keep the general setting of the previous section and assume now that 
$R=k$ is a field and $\ell:=\charak(k)\neq p$; thus, the parameters of 
the endomorphism algebra $\cH_k$ satisfy $q_s\neq 0$ for all $s\in S$. With 
this assumption, we have the following two results:
\begin{itemize}
\item[{\rm (A)}] Every simple submodule of $kG\fb$ is isomorphic to a 
factor module of $kG\fb$, and vice versa; see Hiss 
\cite[Theorem~5.8]{hiss2} where this is proved much more generally.
\item[{\rm (B)}] $\cH_k$ is a quasi-Frobenius algebra. Indeed,  since
$q_s\neq 0$ for all $s\in S$, $\cH_k$ even is a symmetric algebra with
respect to the trace form $\tau\colon \cH_k\rightarrow k$ defined by 
$\tau(T_1)=1$ and $\tau(T_w)=0$ for $w\neq 1$; see, e.g., 
\cite[8.1.1]{gepf}. 
\end{itemize}
It was first observed in \cite[\S 2]{lymgh} that, in this situation, the 
results of Green \cite{Green2} apply (the original applications of which 
have been to representations of $G$ over fields of characteristic equal 
to~$p$). Let us denote by $\Irr_k(G)$ the set of all simple $kG$-modules 
(up to isomorphism) and by $\Irr_k(G\mid B)$ the set of all $Y \in
\Irr_k(G)$ such that $Y$ is isomorphic to a submodule of $kG\fb$. In the 
general framework of \cite{hiss2}, this is the Harish-Chandra series 
consisting of the {\em unipotent principal series representations} of~$G$. 
Furthermore, let $\Irr(\cH_k)$ be the set of all simple $\cH_k$-modules (up 
to isomorphism). Then, by \cite[Theorem~2]{Green2}, the Hom functor 
$\fF_k$ induces a bijection
\begin{equation*}
\Irr_k(G\mid B) \stackrel{\sim}{\longrightarrow} \Irr(\cH_k), \qquad 
M\mapsto \fF_k(M)=\Hom_{kG}(kG\fb,M); \tag{$\spadesuit$}\]
furthermore, by \cite[Theorem~1]{Green2}, each indecomposable direct 
summand of $kG\fb$ has a simple socle and a unique simple quotient. 
Combined with Remark~\ref{othm2a}, this already shows that $\St_k$ 
has a simple socle. More precisely, we have:

\begin{thm}[Cf.\ Tinberg \protect{\cite[4.10]{tin1}}] \label{othm2} Let 
$Y \subseteq \St_k$ be a simple submodule. Then $\fu_1\fe\in Y$ and, 
hence, $Y$ is uniquely determined. Furthermore, $\dim \fF_k(Y)=1$ and the 
action of $\cH_k$ on $\fF_k(Y)$ is given by the algebra homomorphism 
$\varepsilon\colon \cH_k\rightarrow k$, $T_w\mapsto (-1)^{l(w)}$. 
\end{thm}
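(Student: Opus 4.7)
The plan is to deduce the theorem from Corollary~\ref{othm2c} together with Green's bijection~$(\spadesuit)$. Since $\fF_k = \Hom_{kG}(kG\fb,-)$ is left exact, the inclusion $Y \hookrightarrow \St_k$ induces an injection $\fF_k(Y) \hookrightarrow \fF_k(\St_k)$. By Corollary~\ref{othm2c}, the codomain has $k$-dimension~$1$, spanned by $\theta_{\fu_1\fe}$, and $\cH_k$ acts on it through the sign character $\varepsilon\colon T_w\mapsto (-1)^{l(w)}$. In particular, $\fF_k(\St_k)$ is itself already a simple $\cH_k$-module.

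The key step is to observe that $Y$ belongs to $\Irr_k(G\mid B)$, because $Y$ is by assumption a simple submodule of $kG\fb\supseteq \St_k$. Hence, by Green's bijection~$(\spadesuit)$, $\fF_k(Y)$ is a simple (in particular nonzero) $\cH_k$-module. Combining this with the injection of $\fF_k(Y)$ into the $1$-dimensional space $\fF_k(\St_k)$ forces $\fF_k(Y) = \fF_k(\St_k)$, which immediately gives $\dim\fF_k(Y)=1$ and the claimed description of the $\cH_k$-action via $\varepsilon$.

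The containment $\fu_1\fe \in Y$ and the uniqueness of $Y$ then follow at once. Transporting the equality $\fF_k(Y) = \fF_k(\St_k)$ back through the isomorphism $\Fix_B(-)\cong \fF_k(-)$ recalled in Section~\ref{sec2}, the spanning vector $\theta_{\fu_1\fe}$ corresponds to $\fu_1\fe$, and thus $\fu_1\fe \in \Fix_B(Y) \subseteq Y$. Finally, $\fu_1\fe\neq 0$ by Theorem~\ref{stein}(i), since it is the sum of $|U|$ distinct $k$-basis elements of $\St_k$; hence any two simple submodules both contain $\fu_1\fe$, their intersection is a nonzero submodule of each, and simplicity forces them to coincide.

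I do not anticipate any genuine technical obstacle: the substantive work has already been carried out in Lemma~\ref{othm2b} and Corollary~\ref{othm2c}, which identify $\fF_k(\St_k)$ as the one-dimensional sign representation of $\cH_k$. The only points that need to be handled cleanly are the left exactness of $\fF_k$, the use of Green's bijection~$(\spadesuit)$ (which is why the hypothesis $\ell\neq p$, equivalently condition~(B), is essential), and the passage back via $\Fix_B(-)\cong \fF_k(-)$ to pin down the explicit generator $\fu_1\fe$ inside $Y$.
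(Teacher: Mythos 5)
Your proposal is correct and follows essentially the same route as the paper: embed $\fF_k(Y)$ into the one-dimensional $\fF_k(\St_k)$ from Corollary~\ref{othm2c}, show $\fF_k(Y)\neq\{0\}$, conclude equality, then pull back to get $\fu_1\fe\in Y$. The only cosmetic difference is that you invoke the bijection~$(\spadesuit)$ to see $\fF_k(Y)\neq\{0\}$ whereas the paper cites Property~(A) directly (and $(\spadesuit)$ itself rests on (A) and (B)), so the logical content is the same.
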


\begin{proof} By composing any map in $\fF_k(Y)$ with the inclusion $Y
\subseteq \St_k$, we obtain an embedding $\fF_k(Y)\hookrightarrow 
\fF_k(\St_k)$ and we identify $\fF_k(Y)$ with a subset of $\fF_k(\St_k)$ in 
this way. Now $Y\subseteq \St_k\subseteq kG\fb$ and so $\fF_k(Y)\neq 
\{0\}$ by Property~(A) above. Consequently, by Corollary~\ref{othm2c},
we have $\fF_k(Y)=\fF_k(\St_k)=\langle \theta_{\fu_1\fe}\rangle_k$ and 
$\cH_k$ acts via $\varepsilon$. Furthermore, by the identification
$\fF_k(Y)\subseteq \fF_k(\St_k)$, we must have $\theta_{\fu_1\fe}(kG\fb)
\subseteq Y$ and so $\fu_1\fe\in Y$. \qed
\end{proof}

\begin{prop} \label{cmult} Let $Y\subseteq \St_k$ be as in 
Theorem~\ref{othm2}. Then $Y$ is absolutely irreducible and occurs 
only once as a composition factor of $\St_k$. Moreover, $Y$ is the 
only composition factor of $\St_k$ which belongs to $\Irr_k(G\mid B)$.
\end{prop}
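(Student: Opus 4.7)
The plan is to apply the Hom functor $\fF_k$ and Green's bijection $(\spadesuit)$ to transfer the question from $kG$-modules to $\cH_k$-modules, exploiting Corollary~\ref{othm2c} which identifies $\fF_k(\St_k)$ with the one-dimensional $\cH_k$-module $k_\varepsilon$ on which each $T_w$ acts as $(-1)^{l(w)}$.

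For absolute irreducibility of $Y$, I would extend scalars to an algebraic closure $\bar{k}$ of $k$ and decompose the semisimple $\bar{k}G$-module $Y \otimes_k \bar{k}$ into its absolutely simple (Galois-conjugate) constituents $\bigoplus_{i=1}^r Y_i^{m_i}$. Each $Y_i$ is a submodule of $Y \otimes_k \bar{k} \subseteq \bar{k}G\fb$, so $Y_i \in \Irr_{\bar{k}}(G \mid B)$, and Green's bijection over $\bar{k}$ assigns to it a nonzero simple $\cH_{\bar{k}}$-module $\fF_{\bar{k}}(Y_i)$. The dimension comparison
\[\sum_{i=1}^r m_i \dim_{\bar{k}} \fF_{\bar{k}}(Y_i) \;=\; \dim_{\bar{k}} \fF_{\bar{k}}(Y \otimes_k \bar{k}) \;=\; \dim_k \fF_k(Y) \;=\; 1\]
then forces $r = m_1 = \dim \fF_{\bar{k}}(Y_1) = 1$, so that $Y \otimes_k \bar{k} = Y_1$ is absolutely simple; equivalently, $Y$ is absolutely irreducible.

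For the multiplicity and uniqueness statement, the natural route is to apply $\fF_k$ to a composition series of $\St_k$: composition factors outside $\Irr_k(G \mid B)$ are killed by $\fF_k$ (by property~(A), since they are neither submodules nor quotients of $kG\fb$), while those in $\Irr_k(G \mid B)$ are sent by Green's bijection to nonzero simple $\cH_k$-modules. Since $\fF_k(\St_k) = k_\varepsilon$ is itself simple, a dimension count forces at most one composition factor of $\St_k$ to lie in $\Irr_k(G\mid B)$ and that factor to occur with multiplicity one; since $Y$ is already known to be such a composition factor (a simple submodule with $\fF_k(Y) = k_\varepsilon$, by Theorem~\ref{othm2}), it is the unique one.

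The main obstacle is that $\fF_k = \Hom_{kG}(kG\fb,-)$ is only left exact in general: $kG\fb$ need not be projective when $|B| = |U| \cdot |H|$ fails to be invertible in $k$ (i.e., when $\ell$ divides $|H|$), so the naive dimension count along a composition series is not automatic. To legitimize it, what is needed is the multiplicity formula $[X:Z] = [\fF_k(X):\fF_k(Z)]$ for $Z \in \Irr_k(G \mid B)$, equivalently the exactness of $\fF_k$ on the subcategory of $kG$-modules whose composition factors all lie in $\Irr_k(G \mid B)$. This should follow from the general condensation framework of Green~\cite{Green2} together with the symmetric structure of $\cH_k$ (Property~(B)).
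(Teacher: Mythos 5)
Your argument for absolute irreducibility is sound in spirit and genuinely different from the paper's. The paper instead cites \cite[2.13(d)]{my98} to identify $\End_{kG}(Y)\cong\End_{\cH_k}(\varepsilon)\cong k$ directly, which is cleaner and avoids the one subtlety in your route: you need $Y\otimes_k\bar{k}$ to be semisimple, which requires the centre of $\End_{kG}(Y)$ to be separable over $k$; this is automatic if $k$ is perfect but is not a free step over an arbitrary field of characteristic $\ell$.

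The multiplicity part has a genuine gap, and it is exactly the one you flag at the end. Left exactness of $\fF_k=\Hom_{kG}(kG\fb,-)$ gives, along a composition series $\{0\}=Z_0\subsetneqq\cdots\subsetneqq Z_r=\St_k$ with factors $Y_i$, only the \emph{wrong-way} estimate
\[
\dim\fF_k(\St_k)\;\leq\;\sum_{i=1}^r\dim\fF_k(Y_i),
\]
since $\fF_k(Z_i)/\fF_k(Z_{i-1})$ merely \emph{injects} into $\fF_k(Y_i)$. Knowing $\dim\fF_k(\St_k)=1$ therefore puts no upper bound at all on the number of $i$ with $Y_i\in\Irr_k(G\mid B)$; your ``dimension count'' would need the opposite inequality. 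The multiplicity formula $[X:Z]=[\fF_k(X):\fF_k(Z)]$ that you invoke as the fix is not a consequence of Green's condensation framework or of $\cH_k$ being symmetric; those yield the bijection $(\spadesuit)$ on simples and the socle/head statements, but not exactness of $\fF_k$, which genuinely fails when $\ell\mid|H|$. The paper's proof sidesteps this by trading $\Fix_B=\fF_k$ for $\Fix_U$: since $U$ is a $p$-group and $\ell\neq p$, restriction to $U$ is semisimple, so $\Fix_U$ \emph{is} exact and gives the honest equality $\sum_i\dim\Fix_U(Y_i)=\dim\Fix_U(\St_k)=1$. Combined with $\Fix_B(Y_i)\subseteq\Fix_U(Y_i)$ and $\Fix_B(Y_i)\cong\fF_k(Y_i)\neq\{0\}$ for $Y_i\in\Irr_k(G\mid B)$, this forces at most one such $Y_i$. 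That passage to $U$-fixed points is the missing idea in your proposal.
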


\begin{proof} Recall that $\fF_k(Y)\neq \{0\}$ and $Y$ corresponds to 
$\varepsilon \colon \cH_k\rightarrow k$ via~($\spadesuit$). Hence, by 
\cite[2.13(d)]{my98}, we have $\End_{kG}(Y) \cong \End_{\cH_k}(\varepsilon)
\cong k$ and so $Y$ is absolutely irreducible. Now let $\{0\}=Z_0
\subsetneqq Z_1 \subsetneqq \ldots \subsetneqq Z_r=\St_k$ be a 
composition series and $Y_i:=Z_i/Z_{i-1}$ for $i=1,\ldots,r$ be the 
corresponding simple factors. Since $\ell\neq p$, the restriction 
of $\St_k$ to the subgroup $U\subseteq B$ is semisimple and, hence, 
isomorphic to the direct sum of the restrictions of the $Y_i$. 
Taking fixed points under $U$, we obtain
\[\dim \Fix_U(Y_1)+\ldots +\dim \Fix_U(Y_r)=\dim \Fix_U(\St_k)=\dim 
\langle \fu_1e\rangle_k=1.\]
Now, if $Y_i\in \Irr_k(G\mid B)$, then $\Fix_U(Y_i)\supseteq 
\Fix_B(Y_i) \cong \fF_k(Y_i)\neq \{0\}$ by Property~(A) and so we obtain
a non-zero contribution to the sum on the left hand side. Hence, there 
can be at most one $Y_i$ which belongs to $\Irr_k(G\mid B)$. Since $Y_1=Y
\subseteq kG\fb$ does belong to $\Irr_k(G\mid B)$, this proves the remaining 
assertions. \qed
\end{proof}

\begin{exmp} \label{soctriv} It is easily seen that $\fF_k(k_G)$ is also 
$1$-dimensional (spanned by the function $kG\fb\rightarrow k$ which takes 
constant value~$1$ on all $g\fb$ for $g\in G$) and $\cH_k$ acts on 
$\fF_k(k_G)$ via the algebra homomorphism $\mbox{ind}\colon \cH_k
\rightarrow k$ such that $\mbox{ind}(T_s)=q_s$ for all $s\in S$; see, 
e.g., \cite[4.3.3]{geja}. Let $Y$ be the simple socle of $\St_k$, as
in Theorem~\ref{othm2}. Then, by ($\spadesuit$), we obtain:
\[ Y\cong k_G\quad \Leftrightarrow\quad \fF_k(Y)\cong \fF_k(k_G)\quad
\Leftrightarrow\quad \varepsilon=\mbox{ind} \quad \Leftrightarrow\quad
q_s=-1 \mbox{ for all $s\in S$}.\]
Thus, we recover a result of Hiss \cite{hiss0} and Khammash \cite{kham} 
in this way. Furthermore, Proposition~\ref{cmult} implies that, if 
$q_s\neq 1$ for some $s\in S$, then $k_G$ is not even a composition 
factor of $\St_G$. (This result is also contained in \cite{hiss0}.)
\end{exmp}

\begin{lem} \label{stcomp} Let $M'$ be the Steinberg component in a 
given direct sum decomposition of $kG\fb$, as in Remark~\ref{othm2a}. 
Then $\St_k=M'$ if and only if $\St_k$ is irreducible. 
\end{lem}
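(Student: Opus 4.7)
My plan is to reduce the equivalence to Steinberg's criterion $[G:B]1_k\neq 0$ from Theorem~\ref{stein}(ii) by detecting when $\St_k$ itself is a $kG$-direct summand of $kG\fb$. The key tool is the explicit one-dimensional description $\fF_k(\St_k)=\langle\theta_{\fu_1\fe}\rangle_k$ from Corollary~\ref{othm2c}, together with the evaluation $\theta_{\fu_1\fe}(\fe)=[G:B]\fe$ computed in its proof.

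For the implication $\St_k=M'\Rightarrow\St_k$ irreducible, I would observe that $M'$, being a summand in the given decomposition of $kG\fb$, carries a canonical $kG$-linear projection $\pi\colon kG\fb\to M'=\St_k$ that restricts to the identity on $\St_k$. Since $\pi\in\fF_k(\St_k)$, Corollary~\ref{othm2c} forces $\pi=c\,\theta_{\fu_1\fe}$ for some $c\in k$, and evaluating at $\fe$ yields $\fe=\pi(\fe)=c\,\theta_{\fu_1\fe}(\fe)=c[G:B]\fe$. Because $\fe\neq 0$ in $\St_k$ by Theorem~\ref{stein}(i), this forces $c[G:B]1_k=1$, so $[G:B]1_k\neq 0$ and hence $\St_k$ is irreducible by Theorem~\ref{stein}(ii).

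For the converse, I would produce such a projection explicitly. If $\St_k$ is irreducible then $[G:B]1_k\neq 0$ by Theorem~\ref{stein}(ii), so $\pi:=[G:B]^{-1}\theta_{\fu_1\fe}$ is well defined and sends $\fe$ to $\fe$; since $\St_k=kG\fe$ and $\pi$ is $kG$-linear, $\pi$ restricts to the identity on $\St_k$. This makes $\St_k$ a direct summand of $kG\fb$. To upgrade this to $\St_k=M'$, I would observe that $\pi(M')\subseteq\St_k\subseteq M'$, so $\pi|_{M'}$ is an idempotent endomorphism of $M'$ with image $\St_k$; indecomposability of $M'$ together with $\St_k\neq 0$ then forces $\ker(\pi|_{M'})=0$, whence $M'=\St_k$.

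The main obstacle, in my view, is the very last step of the converse: splitting the inclusion $\St_k\hookrightarrow kG\fb$ is essentially routine given $[G:B]1_k\neq 0$, but one must match the resulting summand against the \emph{given} Krull--Schmidt decomposition rather than being content with an abstract isomorphism. The idempotent-restriction trick on $M'$ is what bridges this gap, and it is the only nontrivial pointwise-equality argument in the proof.
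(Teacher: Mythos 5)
Your proof is correct, and it takes a genuinely different route from the paper's in both directions. For the implication ``$\St_k=M'\Rightarrow\St_k$ irreducible'', the paper argues structurally: since $M'=\St_k$ would be a \emph{factor} module of $kG\fb$, any simple quotient $\St_k/Z$ lies in $\Irr_k(G\mid B)$ by Property~(A), and then Proposition~\ref{cmult} (multiplicity one of the socle $Y$ in $\St_k$, and $Y$ the unique composition factor of $\St_k$ in $\Irr_k(G\mid B)$) forces $Z=\{0\}$. You instead bypass Property~(A) and Proposition~\ref{cmult} entirely: you exploit the one-dimensionality $\fF_k(\St_k)=\langle\theta_{\fu_1\fe}\rangle_k$ from Corollary~\ref{othm2c} to write the splitting projection as $c\,\theta_{\fu_1\fe}$ and then read off $[G:B]1_k\neq 0$ by evaluating at $\fe$. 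For the converse, the paper argues abstractly by cases: if $\ell=p$ one cites Steinberg's Lemma~2 to see $\St_k$ is projective, and if $\ell\neq p$ one cites semisimplicity of $kG\fb$; either way $\St_k$ is a summand, and the paper concludes $\St_k=M'$ without spelling out the Krull--Schmidt matching. You instead \emph{construct} the splitting as $\pi=[G:B]^{-1}\theta_{\fu_1\fe}$ and then carefully match it to the given decomposition via the idempotent $\pi|_{M'}\in\End_{kG}(M')$, using that this local ring has only the idempotents $0$ and $1$. Your version is more self-contained and, as a small bonus, is uniform in the characteristic (it needs no case split on $\ell=p$ versus $\ell\neq p$, and the forward direction avoids the $\ell\neq p$ hypothesis hidden in Property~(A)); the paper's version leans on more machinery but is shorter to state. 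Both are sound.
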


\begin{proof} Assume first that $M'=\St_k$ and let $Z\subsetneqq \St_k$ be
a maximal submodule. Now $\St_k=M'$ is a factor module of $kG\fb$ and so 
$\St_k/Z$ belongs to $\Irr_k(G\mid B)$, by Property~(A). Hence,
by Proposition~\ref{cmult}, we must have $Z=\{0\}$. Conversely, assume 
that $\St_k$ is irreducible. Then $\ell\nmid [G:B]$ by Theorem~\ref{stein}.
If $\ell=p$, then $\fe$ is a non-zero scalar multiple of an idempotent 
in $kG$, by \cite[Lemma~2]{St57}. Hence, $\St_k$ is projective in this 
case and so $\St_k$ is a direct summand of $kG\fb$. If $\ell\neq p$, then 
the assumption $\ell\nmid [G:B]$ implies that $kG\fb$ is semisimple; 
see \cite[Lemma~4.3.2]{geja}. Hence, again, $\St_k$ is a direct summand 
of $kG\fb$. In both cases, it follows that $\St_k=M'$. \qed
\end{proof}

\begin{exmp} \label{soctrivrk1} Assume that $G$ has a $BN$-pair of 
rank~$1$, that is, $W=\langle s \rangle$ where $s\in W$ has order~$2$. 
Then, by the sharp form of the Bruhat decomposition, we have $[G:B]=1+|U|=
1+\dim \St_k$. Thus, there are only two cases.

If $q_s\neq -1$, then $kG\fb\cong k_G \oplus \St_k$ and $\St_k$ is 
irreducible by Theorem~\ref{stein}.

If $q_s=-1$, then the socle of $\St_k$ is the trivial module $k_G$
by Example~\ref{soctriv}.

\noindent In the second case, the structure of $\St_k$ can be quite 
complicated. For example, let $G={^2G}_2(q^2)$ be a Ree group, where 
$q$ is an odd power of $\sqrt{3}$. If $\ell=2$, then Landrock--Michler 
\cite[Prop.~3.8(b)]{LaMi} determined socle series for $kG\fb$ and $\St_k$:
\[ kG\fb:\quad \begin{array}{ccc} & k_G &\\& \varphi_2 & \\ \varphi_4&  
\varphi_3& \varphi_5\\ & \varphi_2 & \\ & k_G& \end{array}\qquad\qquad\qquad
\St_k:\quad \begin{array}{ccc} & \varphi_2 & \\ \varphi_4&  
\varphi_3& \varphi_5\\ & \varphi_2 & \\ & k_G& \end{array}\]
where $\varphi_i$ ($i=1,2,3,4,5$) are simple $kG$-modules and $\varphi_4$ 
is the contragredient dual of $\varphi_5$. 

It is not true in general that $\St_k$ has a unique simple quotient.
For example, let $G=\GU_3(q)$ where $q$ is any prime power. Assume 
that $\ell$ is a prime such that $\ell\mid q+1$. Then socle series 
for $\St_k$ are known by the work of various authors; see Hiss 
\cite[Theorem~4.1]{hiss} and the references there: 
\[ \St_k:\quad \begin{array}{ccc}  & & \varphi\\ 
\varphi \oplus \vartheta & \qquad\qquad & \vartheta\\ k_G & & \varphi \\
& & k_G \\[1pt] \mbox{\scriptsize ($\ell=2$ and $4\mid q-1$)} & & 
\mbox{\scriptsize (otherwise)}  \end{array}\]
where $\varphi$ and $\vartheta$ are simple $kG$-modules. See also the 
examples in Gow \cite[\S 5]{gow}.
\end{exmp}

\begin{exmp} \label{socgln} Let $G=\GL_n(q)$ and $\cU_k(G)$ be
the set of all $Y\in \Irr_k(G)$ such that $Y$ is a composition factor
of $kG\fb$. James \cite[16.4]{Jam1} called these the {\em unipotent 
modules} of $G$ and showed that there is a canonical parametrization
\[ \cU_k(G)=\{D_\mu\mid\mu\vdash n\}.\] 
(See also \cite[7.35]{Jam3}.) Here, $D_{(n)}=k_G$, as follows immediately 
from \cite[Def.~1.11]{Jam1}. 

The above parametrisation is characterised as follows. For each 
partition $\lambda\vdash n$, let $M_\lambda$ be the permutation 
representation of $G$ on the cosets of the corresponding parabolic 
subgroup $P_\lambda \subseteq G$ (block triangular matrices with diagonal
blocks of sizes given by the parts of $\lambda$). Then $D_\mu$ has
composition multiplicity~$1$ in $M_\mu$ and composition multipliciy $0$ 
in $M_\lambda$ unless $\lambda \trianglelefteq \mu$; see \cite[11.12(iv), 
11.13]{Jam1}. This shows, in particular, that the above parametrisation is 
consistent with other known parametrisations of $\cU_k(G)$, e.g., 
the one in \cite[\S 3]{lymgh} based on properties of the $\ell$-modular 
decomposition matrix of $G$.

If $\ell=0$, let us set $e:=\infty$; if $\ell$ is a prime ($\neq p$), 
then let 
\[e:=\min \{i\geq 2\mid 1+q+q^2+\cdots + q^{i-1}\equiv 0 \bmod \ell\}.\] 
Then, by James \cite[Theorem~8.1(ix), (xi)]{Jam3}, the subset 
$\Irr_k(G \mid B) \subseteq \cU_k (G)$ consists precisely of those 
$D_\lambda$ where $\lambda \vdash n$ is $e$-regular.
Now let $Y$ be the socle of $\St_k$, as in Theorem~\ref{othm2}. Then 
$Y\in \Irr_k(G\mid B)$ and so $Y\cong D_{\mu_0}$ for a well-defined 
$e$-regular partition $\mu_0\vdash n$. This partition $\mu_0$ can be 
identified as follows. Write $n=(e-1)m+r$ where $0\leq r<e
-1$. (If $e=\infty$, then $m=0$ and $r=n$.) We claim that
\[\mu_0=(\underbrace{m+1,m+1,\ldots,m+1}_{\text{$r$ times}},
\underbrace{m,m,\ldots,m}_{\text{$e{-}r{-}1$ times}})\vdash n.\]
Indeed, by Theorem~\ref{othm2} and ($\spadesuit$), the $kG$-module $Y$ 
corresponds to the $1$-dimensional representation $\varepsilon\colon \cH_k
\rightarrow k$. Now $\Irr(\cH_k)$ also has a natural parametrisation by the 
$e$-regular partitions of $n$, a result originally due to Dipper 
and James; see, e.g., \cite[8.1(i)]{Jam3}, \cite[\S 3.5]{geja} and the 
references there. By \cite[Theorem~8.1(xii)]{Jam3} (or the general
discussion in \ref{princ} below), this parametrisation is compatible with 
the above parametrisation of $\cU_k(G)$, in the sense that the 
partition $\mu_0\vdash n$ such that $Y\cong D_{\mu_0}$ must also 
parametrise~$\varepsilon$. Now note that $\varepsilon\circ\gamma =
\mbox{ind}$, where $\mbox{ind} \colon \cH_k \rightarrow k$ is defined in 
Example~\ref{soctriv} and $\gamma \colon \cH_k\rightarrow \cH_k$ is the 
algebra automorphism such that $\gamma (T_s)= -q_sT_s^{-1}$ (see 
\cite[Exc.~8.2]{gepf}). The definitions immediately show that $\mbox{ind}$ 
is parametrised by the partition $(n)$. Thus, our problem is a special 
case of describing the ``Mullineux involution'' on $e$-regular 
partitions which, for the particular partition $(n)$, has the solution 
stated above by Mathas \cite[6.43(iii)]{mathas}. (I~thank Nicolas Jacon 
for pointing out this reference to me.)

We remark that Ackermann \cite[Prop.~3.1]{Ack} already showed that
$\St_k$ has precisely one composition factor $D_{\mu_0}$ where $\mu_0$
is the image of $(1^n)$ under the Mullineux involution; however, he 
does not locate $D_{\mu_0}$ in a composition series of $\St_k$.
\end{exmp}

\begin{abs} \label{princ} For general $G$, the definition of 
{\em unipotent modules} is more complicated than for $\GL_n(q)$ (see, 
e.g., \cite[\S 1]{ghm1}), but one can still proceed as follows. Let us 
assume that $G$ is a true finite group of Lie type, as in 
Remark~\ref{truegf}. We shall write $\Irr_\C(W)=
\{E^\lambda \mid \lambda \in\Lambda\}$ where $\Lambda$ is some finite 
indexing set. It is a classical fact that, if $k=\C$, then there is a 
bijection $\Irr_\C(W) \leftrightarrow \Irr(\cH_\C)$, $E^\lambda 
\leftrightarrow E^\lambda_q$, and a decomposition 
\[ \C G\fb\cong \bigoplus_{\lambda \in \Lambda} \underbrace{\rho^\lambda
\oplus \ldots \oplus \rho^\lambda}_{\text{$\dim E^\lambda$ times}}
\qquad \mbox{where}\qquad \fF_\C(\rho^\lambda)\cong E_q^\lambda \quad
\mbox{for all $\lambda \in \Lambda$}.\]
Hence, we have a natural parametrisation $\Irr_\C(G\mid B)=\{\rho^\lambda
\mid \lambda \in \Lambda\}$ in this case; see, e.g., Carter 
\cite[\S 10.11]{Ca2}, Curtis--Reiner \cite[\S 68B]{CR2} (and also 
\cite[Exp.~2.2]{my98}, where the Hom functor is linked to the settings 
in [{\em loc.\ cit.}]). In general, under some mild conditions on $k$, 
it is shown in \cite[Theorem~1.1]{myprinc} that there is still a natural 
parametrisation of $\Irr_k(G\mid B)$, but now by a certain subset 
$\Lambda_k^\circ \subseteq \Lambda$. We briefly describe how this is 
done, where we refer to the exposition in \cite[\S 4.4]{geja} for further 
details and references.

First, to each $E^\lambda$ one can attach a numerical value 
$\ba_\lambda\in \Z_{\geq 0}$ (Lusztig's ``$\ba$-invariant''); note that 
$\lambda \mapsto\ba_\lambda$ depends on the exponents $c_s$ such that 
$|U_s|=q^{c_s}$ for $s\in S$. Then, under some mild conditions on 
$k$, the algebra $\cH_k$ is ``cellular'' in the sense of Graham--Lehrer,
where the corresponding cell modules are parametrized by $\Lambda$, and 
$\Lambda$ is endowed with the partial order $\preceq$ such that $\mu \preceq
\lambda$ if and only if $\mu=\lambda$ or $\ba_\lambda<\ba_\mu$. Finally,
by the general theory of cellular algebras, there is a canonically defined 
subset $\Lambda_k^\circ \subseteq \Lambda$ such that 
\[  \Irr(\cH_k)=\{L_k^\mu \mid \mu\in \Lambda_k^\circ\},\]
where $L_k^\lambda$ is the unique simple quotient of the cell module
corresponding to $\lambda\in\Lambda_k^\circ$. Hence, via the Hom 
functor and ($\spadesuit$), we obtain the desired parametrisation
\[ \Irr_k(G\mid B)=\{Y^\mu \mid \mu\in\Lambda_k^\circ\} \qquad
\mbox{where} \qquad \fF_k(Y^\mu)\cong L_k^\mu \mbox{ for $\mu
\in\Lambda_k^\circ$}.\]
Let $M\in \Irr(\cH_k)$ and denote by $d_{\lambda,M}$ the multiplicity of 
$M$ as a composition factor of the cell module indexed by~$\lambda\in 
\Lambda$. Then, by \cite[3.2.7]{geja}, the unique $\mu\in \Lambda_k^\circ$ 
such that $M\cong L^\mu$ is characterised by the condition that $\mu$ is 
the unique element at which the function $\{\lambda \in \Lambda \mid 
d_{\lambda,M}\neq 0\} \rightarrow \ba_\lambda$ takes its minimum value.

Now recall that the simple socle $Y\subseteq \St_k$ belongs to $\Irr_k(G 
\mid B)$ and it corresponds, via the Hom functor and ($\spadesuit$), to the 
$1$-dimensional representation $\varepsilon \colon \cH_k\rightarrow k$. 
The unique $\mu_0\in \Lambda_k^\circ$ such that $Y\cong Y^{\mu_0}$ is 
found as follows. We order the elements of $\Lambda$ according to 
increasing $\ba$-invariant; then $\mu_0$ is the first element in this list 
for which we have $d_{\mu_0,\varepsilon} \neq 0$. Note also that 
$\varepsilon$ is afforded by a cell module; the unique $\lambda_0 \in
\Lambda$ labelling this cell module is characterised by the condition
that $\ba_{\lambda_0}=\max\{\ba_\lambda \mid \lambda \in \Lambda\}$
(see, e.g., \cite[1.3.3]{geja}).

For example, if $G=\GL_n(q)$, then $W=\fS_n$ and $\Lambda$ is the set of 
partitions of $n$. In this case, we have $\lambda_0=(1^n)$ and $\mu_0$ is
described in Example~\ref{socgln}.

If tables with the decomposition numbers $d_{\lambda,M}$ for $\cH_k$ are 
known, then $\mu_0$ can be simply read off these tables. Thus, $\mu_0
\in \Lambda_k^\circ$ can be determined for all groups of exceptional type, 
using the information in \cite[App.~F]{gepf}, \cite[Chap.~7]{geja}; the
results are given in Table~\ref{tabsoc}. (If there is no entry in this
table corresponding to a certain value of $e$, then this means that 
$\ell \nmid [G:B]$ and so $\St_k$ is simple.)
\end{abs}

\begin{table}[htbp]
\caption{The labels $\mu_0\in \Lambda_k^\circ$ for $G$ of exceptional 
type and $\ell \mid [G:B]$} \label{tabsoc} 
\centerline{\small $\renewcommand{\arraystretch}{1.1} \arraycolsep 3pt 
\begin{array}{ccccccccccccccccc}  \hline
e & 2 & 3 & 4 & 5 & 6 & 7 & 8 & 9 & 10 & 12 & 14 & 15 & 18 & 20 & 24 & 30\\
\hline G_2(q) & 1_W & \sigma_2& & &\sigma_1 &  & & & & & & & & & &  \\
{^3\!D}_4(q) & 1_W& \sigma_2& & &\varepsilon_1 &  & & & & \sigma_1 &&&&&&\\
{^2\!F}_4(q^2)&\sigma_2&&\varepsilon_1& &\sigma_2&& & & & \sigma_1 &&&&&&\\ 
F_4(q)&1_1&4_1 &6_1 & &12 &  & 9_4& & & 4_5 & & & & & &  \\
{^2\!E}_6(q) &8_1& 4_1&1_3 && 4_4& &8_4 & &8_2 &9_4 & & &4_5 & & &\\
E_6(q) &1_p &15_q &10_s &24_p'& 60_p'& &30_p' & 20_p'& &6_p' & & & & & &\\
E_7(q) &1_a& 15_a' & 70_a'&84_a' & 210_b'& 27_a' & 105_b'& 35_b'& 21_b
&56_a &27_a' & & 7_a& & &  \\
E_8(q) &1_x & 50_x & 175_x&168_y & 420_y&  300_x' & 2835_x'& 50_x'& 
448_z'& 1400_x' & 700_x' & 84_x' & 210_x' & 112_z' & 35_x' & 8_z' \\
\hline \multicolumn{17}{l}{\text{In type $E_8$, $\ell>5$; otherwise, 
$\ell>3$; here, $e:=\min\{i\geq 2\mid 1+q_0+q_0^2+\cdots + q_0^{i-1}\equiv
0 \bmod\ell\}$,}}\\ \multicolumn{17}{l}{\text{with $q_0:=q$ in all cases
except for ${^2\!F}_4(q^2)$, where $q$ is an odd power of $\sqrt{2}$ and 
$q_0:=q^2$.}} \end{array}$}
\end{table}

Just to illustrate the procedure (and to fix some notation), let us
consider the case where $G={^2\!F}_4(q^2)$; here, $q$ is an odd power
of $\sqrt{2}$. Setting $q_0:=q^2$, we have 
\[ |B|=q_0^{12}(q_0-1)^2 \qquad \mbox{and}\qquad 
[G:B]=(q_0+1)(q_0^2+1)(q_0^3+1) (q_0^6+1).\]
Now, $W=\langle s_1,s_2\rangle$ is dihedral of order $16$ and we have
$\{q_{s_1},q_{s_2}\}=\{q_0,q_0^2\}$. We fix the notation such that 
$q_{s_1}=q_0^2$ and $q_{s_2}=q_0$. As in \cite[Exp.~1.3.7]{geja}, we have
\[ \Irr_\C(W)=\{1_W,\varepsilon,\varepsilon_1,\varepsilon_2,\sigma_1,
\sigma_2,\sigma_3\}\]
where $\varepsilon_1$, $\varepsilon_2$ are $1$-dimensional and
determined by $\varepsilon_1(s_1)=\varepsilon_2(s_2)=1$ and 
$\varepsilon_1(s_2)=\varepsilon_2(s_1)=-1$; furthermore, each $\sigma_i$
is $2$-dimensional and the labelling is such that the trace of $\sigma_1$
on $s_1s_2$ equals $\sqrt{2}$, that of $\sigma_2$ equals $0$, and that of
$\sigma_3$ equals $-\sqrt{2}$.

The ``mild condition'' on $k$ is that $\ell$ must be a ``good'' prime for 
the underlying algebraic group; so, $\ell>3$. Assuming 
that $\ell\mid [G:B]$, we have the following cases to consider:
\[ \ell \mid q_0+1,\qquad \ell \mid q_0^2+1, \qquad \ell\mid q_0^2-q_0+1,
\qquad \ell \mid q_0^4-q_0^2+1,\]
which correspond to $e=2,4,6,12$, respectively. For example, for $e=2,4$, 
the decomposition numbers $d_{\lambda,M}$ are given as follows; see 
\cite[Table~7.6]{geja}:
\begin{center} 
{\small $\arraycolsep 5pt \renewcommand{\arraystretch}{1.1}
\begin{array}{cc|cccc} \hline (e=2) & \ba_\lambda & 
\multicolumn{4}{c}{ d_{\lambda,M}} \\ \hline
\bullet\; 1_W         &  0 & 1 & . & . & .\\
\varepsilon_1         &  1 & 1 & . & . & .\\
\bullet\;\sigma_1     &  2 & . & . & 1 & .\\
\bullet\;\sigma_2     &  2 & 1 & 1 & . & .\\
\bullet\;\sigma_3     &  2 & . & . & . & 1\\
  \varepsilon_2       &  5 & . & 1 & . & .\\
   \varepsilon        & 12 & . & 1 & . & .\\
\hline \end{array} \qquad \qquad \qquad 
\begin{array}{cc|cccc} \hline (e=4) & \ba_\lambda &  
\multicolumn{4}{c}{ d_{\lambda,M}} \\ \hline
\bullet\; 1_W             &  0 & 1 & . & . & . \\
\bullet\; \varepsilon_1   &  1 & . & 1 & . & . \\
\bullet\;   \sigma_1      &  2 & . & . & 1 & . \\
\bullet\;   \sigma_2      &  2 & . & . & . & 1 \\
            \sigma_3      &  2 & 1 & 1 & . & . \\
            \varepsilon_2 &  5 & 1 & . & . & . \\
            \varepsilon   & 12 & . & 1 & . & . \\
\hline \end{array}$}
\end{center}
Those representations which belong to $\Lambda_k^\circ$ are marked by 
``$\bullet$''. The above procedure for finding $\mu_0$ now yields
$\mu_0=\sigma_2$ for $e=2$ and $\mu_0=\varepsilon_1$ for $e=4$. 

\begin{rem} \label{socclass}
For groups of classical type, $\Lambda$ is a set of certain bipartitions 
of $n$ and the subsets $\Lambda_k^\circ\subseteq \Lambda$ are explicitly
known in all cases; see \cite{geja}. Nicolas Jacon has pointed out to me 
that then $\mu_0\in \Lambda_k^\circ$ can be described explicitly using the 
results of Jacon--Lecouvey \cite{JaLe}. This will be discussed elsewhere 
in more detail.
\end{rem}

\section{The Steinberg module and Harish-Chandra series} \label{sec2b}

We shall assume from now on that $G=\bG^F$ is a true finite group of 
Lie type, as in Remark~\ref{truegf}. Then $G$ satisfies the ``commutator
relations'' and so the parabolic subgroups of $G$ admit Levi decompositions;
see Carter \cite[\S 2.6]{Ca2}, Curtis--Reiner \cite[\S 70A]{CR2}. For each 
subset $J\subseteq S$, let $P_J\subseteq G$ be the corresponding standard 
parabolic subgroup, with Levi decomposition $P_J=U_J\rtimes L_J$ where 
$U_J$ is the unipotent radical of $P_J$ and $L_J$ is the standard Levi 
complement. The Weyl group of $L_J$ is $W_J=\langle J\rangle$ and $L_J$ 
itself is a true finite group of Lie type. Let $A$ be a commutative ring 
(with $1_A$) such that $p$ is invertible in $A$. Then we obtain functors,
called Harish-Chandra induction and restriction, 
\begin{align*}
R_J^S\colon AL_J\mbox{-modules}&\rightarrow AG\mbox{-modules},\\
\Rst_J^S\colon AG\mbox{-modules}&\rightarrow AL_J\mbox{-modules},
\end{align*}
which satisfy properties analogous to the usual induction and restriction, 
like transitivity, adjointness and a Mackey formula; we refer to 
\cite{DipDu}, \cite{hiss2}  and the survey in \cite[\S 3]{my98} for further 
details. An $AG$-module $Y$ is 
called {\em cuspidal} if $\Rst_J^S(Y)=\{0\}$ for all $J\subsetneqq S$. In 
this general setting, we have the following important result due to 
Dipper--Du \cite{DipDu} and Howlett--Lehrer \cite{HL2}. Let $I,J\subseteq 
S$ be subsets such that $wIw^{-1}=J$ for some $w\in W$; let $n\in N$ be a 
representative of~$w$. Then $nL_In^{-1} =L_J$ and 
\[ R_I^S(X)\cong R_J^S({^n\!X})\qquad \mbox{for any $A L_I$-module $X$}.\]
(Here, we denote by ${^n\!X}$ the usual conjugate module for $\cO L_J$; see, 
e.g., \cite[\S 10B]{CR1}). In analogy to \cite[(70.11)]{CR2}, we will 
refer to this as the ``Strong Conjugacy Theorem''.

Furthermore, we now place ourselves in the usual setting for modular 
representation theory; see, e.g., \cite[\S 16A]{CR1}. 
Thus, we assume that our field $k$ has characteristic $\ell>0$ (where 
$\ell\neq p$ as before), and that $k$ is the residue field of a discrete 
valuation ring $\cO$ with field of fractions $K$ of characteristic~$0$. 
Both $K$ and $k$ will be assumed to be ``large enough'', that is, $K$ 
and $k$ are splitting fields for $G$ and all its subgroups. An 
$\cO G$-module $M$ which is finitely generated and free over $\cO$ will 
be called an $\cO G$-lattice. If $M$ is an $\cO G$-lattice, then we may 
naturally regard $M$ as a subset of the $KG$-module $KM:=K\otimes_\cO M$; 
furthermore, by ``$\ell$-modular reduction'', we obtain a $kG$-module 
$\overline{M}:=M/\fp M\cong k \otimes_\cO M$ where $\fp$ is the unique 
maximal ideal of $\cO$. Finally note that, by \cite[Exc.~6.16]{CR1}, 
idempotents can be lifted from $kG$ to $\cO G$, hence, $\cO G$ is 
``semiperfect''. We shall freely use standard notions and properties of 
projective covers, pure submodules etc.; see \cite[\S 4D, \S 6]{CR1}. 

Harish-Chandra induction and restriction are compatible with this set-up.
Indeed, let $J\subseteq S$. If $X$ is an $\cO L_J$-lattice and $Y$ is an 
$\cO G$-lattice, then $R_J^S(X)$ is an $\cO G$-lattice, $\Rst_J^S(Y)$ 
is an $\cO L_J$-lattice, and we have 
\begin{alignat*}{3}
KR_J^S(X)&\cong R_J^S(KX)&\qquad &\mbox{and}&\qquad 
K\Rst_J^S(Y) &\cong \Rst_J^S(KY),\\ \overline{R_J^S(X)}&\cong R_J^S
(\overline{X}) &\qquad &\mbox{and} &\qquad \overline{\Rst_J^S(Y)} 
&\cong \Rst_J^S(\overline{Y}).
\end{alignat*}

\begin{abs} \label{twohiss1}
By Theorem~\ref{stein}, we have the ``canonical'' Steinberg lattice 
$\St_\cO=\cO G\fe$. Here, we can naturally identify $K\St_\cO$ with
$\St_K$ and $\overline{\St}_\cO$ with $\St_k$. Since $\charak(K)=0$, 
the $KG$-module $\St_K$ is irreducible. We obtain further 
$\cO G$-lattices affording $\St_K$ as follows. Let $\sigma \colon 
U\rightarrow K^\times$ be a group homomorphism. Since $\ell\nmid |U|$, 
the values of $\sigma$ lie in $\cO^\times$. Then $\fu_\sigma := 
\sum_{u\in U} \sigma(u)u \in \cO G$ and so $\Gamma_\sigma :=\cO G
\fu_\sigma$ is an $\cO G$-lattice. Since $\fu_\sigma^2=|U|\fu_\sigma$ 
and $|U|$ is a unit in $\cO$, the module $\Gamma_\sigma$ is projective. 
Furthermore, $\Hom_{\cO G}(\Gamma_\sigma,\St_\cO) \cong \fu_\sigma 
\St_\cO=\langle \fu_\sigma \fe \rangle_\cO$ where the last equality 
holds since $\{u\fe\mid u \in U\}$ is an $\cO$-basis of $\St_\cO$ and 
since $\fu_\sigma u=\sigma(u)^{-1} \fu_\sigma$ for all $u\in U$. Thus, 
\[\Hom_{\cO G}(\Gamma_\sigma, \St_\cO)=\langle\varphi_\sigma \rangle_{\cO}
\qquad \mbox{where}\qquad \varphi_\sigma\colon \Gamma_\sigma\rightarrow 
\St_\cO, \;\; \gamma \mapsto \gamma \fu_\sigma \fe.\]
The same computation also works over $K$, hence we obtain
$\dim \Hom_{KG}(K\Gamma_\sigma,\St_K)=1$.  
\end{abs}

\begin{prop}[Cf.\ Hiss \protect{\cite[\S 6]{hiss2}}] \label{twohiss} 
For any $\sigma\colon U\rightarrow K^\times$ as above, there is a unique 
pure $\cO G$-sublattice $\Gamma_\sigma' \subseteq \Gamma_\sigma$ such that, 
if we set $\cS_\sigma:=\Gamma_\sigma/\Gamma_{\sigma}'$, then $K\cS_\sigma
\cong\St_K$. Furthermore, $\varphi_\sigma$ induces an injective 
$\cO G$-module homomorphism $\rho_\sigma \colon \cS_\sigma \rightarrow 
\St_\cO$. The $kG$-module $D_\sigma:=\overline{\cS}_\sigma/
\rad(\overline{\cS}_\sigma)$ is simple and it occurs exactly once as a 
composition factor of $\St_k$.
\end{prop}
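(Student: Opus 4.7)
The plan is to take $\Gamma_\sigma' := \ker(\varphi_\sigma)$ and then to analyze $D_\sigma$ via the decomposition of the projective $\cO G$-lattice $\Gamma_\sigma$ into indecomposable summands, using Brauer reciprocity.

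First, since $\St_\cO$ is $\cO$-torsion-free, so is $\mathrm{im}(\varphi_\sigma) \cong \Gamma_\sigma/\ker(\varphi_\sigma)$; hence $\Gamma_\sigma':=\ker(\varphi_\sigma)$ is automatically a pure sublattice of $\Gamma_\sigma$, and the induced map $\rho_\sigma\colon \cS_\sigma \hookrightarrow \St_\cO$ is injective. The homomorphism $\varphi_\sigma$ is itself nonzero, since $\fu_\sigma\fe=\sum_{u\in U}\sigma(u)\,u\fe$ does not vanish in the $\cO$-basis $\{u\fe\mid u\in U\}$ of Theorem~\ref{stein}(i); hence $K\cS_\sigma$ is a nonzero $KG$-submodule of the irreducible $\St_K$, so $K\cS_\sigma=\St_K$. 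For uniqueness, any pure sublattice $\Gamma''\subseteq\Gamma_\sigma$ with $K(\Gamma_\sigma/\Gamma'')\cong\St_K$ yields a nonzero $KG$-homomorphism $K\Gamma_\sigma\to\St_K$ with kernel $K\Gamma''$; since $\Hom_{KG}(K\Gamma_\sigma,\St_K)$ is one-dimensional by \ref{twohiss1}, this kernel is uniquely determined, and then $\Gamma''=\Gamma_\sigma\cap K\Gamma''=\Gamma_\sigma'$ by purity.

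For the assertions about $D_\sigma$, I would decompose $\Gamma_\sigma=\bigoplus_i P_i^{n_i}$ into projective indecomposable $\cO G$-lattices with simple modular heads $S_i=\overline{P}_i/\rad(\overline{P}_i)$. By Brauer reciprocity,
\[
\dim_K \Hom_{KG}(KP_i,\St_K)=d_{\St_K,S_i},
\]
where $d_{\St_K,S_i}$ is the multiplicity of $S_i$ as a composition factor of $\St_k=\overline{\St}_\cO$. Combined with $\dim_K \Hom_{KG}(K\Gamma_\sigma,\St_K)=1$ from \ref{twohiss1}, this forces a unique index $i_0$ with $n_{i_0}=d_{\St_K,S_{i_0}}=1$ and $d_{\St_K,S_i}=0$ for $i\neq i_0$. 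For such $i$ the $\cO$-module $\Hom_{\cO G}(P_i,\St_\cO)$ is torsion-free of $K$-rank zero, hence zero; so $\varphi_\sigma$ vanishes on every summand of $\Gamma_\sigma$ other than one distinguished copy of $P_{i_0}$, whence $\cS_\sigma\cong P_{i_0}/\ker(\varphi_\sigma|_{P_{i_0}})$.

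Consequently $\overline{\cS}_\sigma$ is a nonzero quotient of the indecomposable projective $\overline{P}_{i_0}$, which has simple head $S_{i_0}$; therefore $D_\sigma\cong S_{i_0}$ is simple, and its multiplicity in $\St_k$ is $d_{\St_K,S_{i_0}}=1$. The main technical obstacle I anticipate is the clean execution of the integral Brauer-reciprocity step, since the projective indecomposable summands of $\Gamma_\sigma$ are not explicitly described here and one must carefully track purity and lifting of idempotents from $kG$ to $\cO G$; once the numerical identity $\sum_i n_i d_{\St_K,S_i}=1$ has been established, however, both the simplicity of $D_\sigma$ and the multiplicity-one assertion follow at once from the identification of $\cS_\sigma$ as a quotient of a single PIM.
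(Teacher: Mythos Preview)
Your proof is correct and follows essentially the same approach as the paper's: both decompose the projective lattice $\Gamma_\sigma$ into PIMs, use the one-dimensionality of $\Hom_{KG}(K\Gamma_\sigma,\St_K)$ to single out a unique summand $P_{i_0}$, and identify $D_\sigma$ with its simple head. Your choice $\Gamma_\sigma':=\ker(\varphi_\sigma)$ is slightly more direct than the paper's $\Gamma_\sigma\cap Z_2$ (though the two coincide), and your explicit invocation of Brauer reciprocity yields the multiplicity-one statement in one stroke rather than via separate ``at least one'' and ``at most one'' arguments.
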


\begin{proof} First we show that a sublattice $\Gamma_\sigma'\subseteq 
\Gamma_\sigma$ with the desired properties exists. Now, since $KG$ is 
semisimple and $\dim \Hom_{KG}(K\Gamma_\sigma,\St_K)=1$, we can write 
$K\Gamma_\sigma =Z_1 \oplus Z_2$ where $Z_1$, $Z_2$ are $KG$-submodules 
such that $Z_1 \cong \St_K$ and $\Hom_{KG}(Z_2,\St_K)=\{0\}$. Then 
$\Gamma_\sigma':=\Gamma_\sigma\cap Z_2$ is a pure submodule of 
$\Gamma_\sigma$. Consequently, $\cS_\sigma:=\Gamma_\sigma/\Gamma_\sigma'$ 
is an $\cO G$-lattice such that $K\cS_\sigma \cong \St_K$. Now consider 
the map $\varphi_\sigma \colon \Gamma_\sigma \rightarrow \St_\cO$. Since
$\Hom_{KG}(Z_2,\St_K)=\{0\}$, we have $\Gamma_\sigma' \subseteq 
\ker(\varphi_\sigma)$ and so we obtain an induced $\cO G$-module 
homomorphism $\rho_\sigma \colon \cS_\sigma \rightarrow \St_\cO$. Since 
$K\cS_\sigma$ is irreducible and $\varphi_\sigma \neq 0$, the map
$\rho_\sigma$ is injective. 

Let us further write $\Gamma_\sigma=P_1\oplus \ldots \oplus P_r$ where 
each $P_i$ is an $\cO G$-lattice which is projective and indecomposable. 
Then $K\Gamma_\sigma=KP_1\oplus \ldots \oplus KP_r$. Since $\dim \Hom_{KG}
(K\Gamma_\sigma,\St_K)=1$, there is a unique $i$ such that $\Hom_{KG}
(KP_i,\St_K)\neq \{0\}$. Then $Z_1 \subseteq KP_i$ and $KP_j\subseteq 
Z_2$ for all $j\neq i$. Thus, we have $\cS_\sigma \cong P_i/(P_i\cap Z_2)$ 
and so $P_i$ is a projective cover of $\cS_\sigma$. This certainly 
implies that $D_\sigma=\overline{\cS}_\sigma/\rad(\overline{\cS}_\sigma) 
\cong \overline{P}_i/\rad(\overline{P}_i)$ is simple and that 
$\overline{P}_i$ is a projective cover of $D_\sigma$. Since $\fu_\sigma \fe
\in \rho_\sigma (\cS_\sigma)$, the induced map $\overline{\rho}_\sigma 
\colon\overline{\cS}_\sigma\rightarrow \St_k$ is non-zero and so $D_\sigma$
is a composition factor of $\St_k$. On the other hand, by standard 
properties of projective modules, the composition multiplicity of 
$D_\sigma$ in $\St_k$ is bounded above by 
\[\dim \Hom_{kG}(P_i,\St_k)\leq \dim \Hom_{kG}(\overline{\Gamma}_\sigma,
\St_k)=\Hom_{KG}(K\Gamma_\sigma,\St_K)=1.\]
Once the existence of $\Gamma_\sigma'$ is shown, the uniqueness 
automatically follows since the intersection of pure submodules is 
pure and $\St_K$ has multiplicity~$1$ in $K\Gamma_\sigma$. \qed
\end{proof}

\begin{abs} \label{gggr} We assume from now on that the center of $\bG$
is connected. Furthermore, we shall fix a group homomorphism $\sigma
\colon U\rightarrow K^\times$ which is a {\em regular character}, that is,
we have $U^*\subseteq \ker(\sigma)$ and the restriction of $\sigma$ to 
$U_s$ is non-trivial for all $s\in S$. (Such characters always exist.) 
Then the corresponding module $\Gamma_\sigma=\cO G\fu_\sigma$ is called 
a {\em Gelfand-Graev module} for $G$; see \cite[\S 8.1]{Ca2} or 
\cite[p.~258]{lecst}. Since the center of $\bG$ is assumed to be connected,
all regular characters of $U$ are conjugate under the action of $H$
and, hence, the corresponding Gelfand-Graev modules will all be 
isomorphic; see \cite[8.1.2]{Ca2}. 

For any $J\subseteq S$, we have $L_J=\bL^F$ where $\bL$ is an $F$-stable 
Levi subgroup in $\bG$; here, the center of $\bL$ will also be connected; 
see \cite[8.1.4]{Ca2}. Our regular character $\sigma$ determines a 
regular character of $L_J$; see, e.g., \cite[8.1.6]{Ca2}. Hence, we also 
have a well-defined Gelfand-Graev module for $\cO L_J$, which we denote by 
$\Gamma_{\sigma}^J$. Applying the construction in Proposition~\ref{twohiss},
we obtain an $\cO L_J$-lattice $\cS_\sigma^J=\Gamma_\sigma^J/
(\Gamma_\sigma^J)'$ and a simple $kL_J$-module $D_\sigma^J:=
\overline{\cS}_\sigma^J/\rad(\overline{\cS}_\sigma^J)$. We have 
$K\cS_\sigma^J\cong \St_K^J$, the Steinberg module for $KL_J$. 
\end{abs}

\begin{lem} \label{sthc0} Let $J\subseteq S$. Then the following hold.
\begin{itemize}
\item[{\rm (i)}] We have $\Rst_J^S(\Gamma_{\sigma})\cong 
\Gamma_{\sigma}^J$ and $\Rst_J^S(\cS_\sigma)\cong \cS_\sigma^J$ (as 
$\cO L_J$-modules).
\item[{\rm (ii)}] If $I\subseteq S$ and $n\in N$ are such that 
$nL_In^{-1}=L_J$, then ${^n\!\cS}_{\sigma}^I\cong \cS_{\sigma}^{J}$
(as $\cO L_J$-modules) and ${^n\!D}_{\sigma}^I \cong D_{\sigma}^{J}$
(as $kL_J$-modules).
\end{itemize}
\end{lem}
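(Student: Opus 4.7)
The plan is to deduce both parts from the uniqueness clause of Proposition~\ref{twohiss}, once the behaviour of the Gelfand--Graev lattices themselves is controlled.

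For part~(i), I would first invoke the classical identity $\Rst_J^S(\Gamma_\sigma)\cong \Gamma_\sigma^J$~---~that is, Harish-Chandra restriction sends a Gelfand--Graev module for $G$ to a Gelfand--Graev module for the Levi $L_J$. This is an integral version of the corresponding character-theoretic statement (see Carter~\cite[\S 8.1]{Ca2}) and, in the present setting, can also be read off from the Mackey formula referred to before Proposition~\ref{twohiss}. Granting this, apply $\Rst_J^S$ to the defining short exact sequence
\[
0\to \Gamma_\sigma' \to \Gamma_\sigma \to \cS_\sigma \to 0.
\]
Since $p$ is invertible in $\cO$, the functor $\Rst_J^S$ is $\cO$-linear and exact, so $\Rst_J^S(\Gamma_\sigma')$ is a pure $\cO L_J$-sublattice of $\Rst_J^S(\Gamma_\sigma)\cong \Gamma_\sigma^J$. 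Combining the compatibility $K\Rst_J^S(-)\cong \Rst_J^S(K\otimes_\cO -)$ with the well-known identity $\Rst_J^S(\St_K)\cong \St_K^J$, one obtains $K\Rst_J^S(\cS_\sigma)\cong \St_K^J$. The uniqueness clause in Proposition~\ref{twohiss}, now applied over~$L_J$, forces $\Rst_J^S(\Gamma_\sigma')=(\Gamma_\sigma^J)'$ and hence $\Rst_J^S(\cS_\sigma)\cong \cS_\sigma^J$.

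For part~(ii), conjugation by $n\in N$ is a ring isomorphism $\cO L_I\to \cO L_J$. Under the twist $X\mapsto {^n\!X}$ the Gelfand--Graev lattice $\Gamma_\sigma^I$ is carried to $\cO L_J\cdot{}^{n\!}\fu_\sigma^I$, which is itself a Gelfand--Graev lattice for $L_J$, attached to the conjugate regular character ${^n\sigma^I}$ of the unipotent radical of the Borel in $L_J$. By the running assumption of~\ref{gggr}, the centre of $\bL$ is connected, so by \cite[8.1.2]{Ca2} all regular characters of $L_J$ are $H_J$-conjugate and the corresponding Gelfand--Graev modules are all mutually isomorphic; hence ${^n\Gamma_\sigma^I}\cong \Gamma_\sigma^J$. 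Conjugating the defining short exact sequence of $\cS_\sigma^I$ and repeating the uniqueness argument of part~(i) over $L_J$~---~using that the Steinberg $KL_J$-module is determined up to isomorphism~---~yields ${^n\!\cS}_\sigma^I\cong \cS_\sigma^J$. Applying $k\otimes_\cO -$ and passing to the simple head then gives ${^nD}_\sigma^I\cong D_\sigma^J$.

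The principal obstacle is the bare identification $\Rst_J^S(\Gamma_\sigma)\cong \Gamma_\sigma^J$; once this is in hand, both parts reduce to routine bookkeeping with the exactness and scalar-extension properties of $\Rst_J^S$ and with the uniqueness in Proposition~\ref{twohiss}. A secondary technical point in part~(ii) is verifying that the conjugate character ${^n\sigma^I}$ really is regular on the unipotent radical relevant to $L_J$; the Strong Conjugacy Theorem recalled at the start of the section, together with the connectedness of $Z(\bL)$, takes care of this.
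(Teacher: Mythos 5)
Your argument is correct and for part (i) it is essentially the paper's: the paper quotes Rodier's theorem \cite[8.1.5]{Ca2} for $\Rst_J^S(K\Gamma_\sigma)\cong K\Gamma_\sigma^J$ and $\Rst_J^S(\St_K)\cong\St_K^J$ from \cite[(71.6)]{CR2}, then says ``(i) follows by a standard argument'' with a pointer to \cite[5.14, 5.15]{my98}; what you have written out (exactness of $\Rst_J^S$, purity, and the uniqueness clause of Proposition~\ref{twohiss} applied to $L_J$) is precisely a filled-in version of that standard argument, though you should be explicit that passing from the $K$-isomorphism $\Rst_J^S(K\Gamma_\sigma)\cong K\Gamma_\sigma^J$ to the $\cO$-isomorphism $\Rst_J^S(\Gamma_\sigma)\cong\Gamma_\sigma^J$ uses projectivity of both sides together with Krull--Schmidt over the semiperfect ring $\cO L_J$ (the ``read off from the Mackey formula'' aside is not really the relevant mechanism). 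For part (ii) you take a genuinely different route: you conjugate the Gelfand--Graev lattice directly, observe that $^n\Gamma_\sigma^I$ is a Gelfand--Graev lattice for $L_J$ attached to a conjugate regular character on the conjugate maximal unipotent, and invoke connectedness of $Z(\bL)$ plus \cite[8.1.2]{Ca2} to identify it with $\Gamma_\sigma^J$, all already at the integral level. The paper instead uses the Strong Conjugacy Theorem over $K$ to get $^nK\Gamma_\sigma^I\cong K\Gamma_\sigma^J$ and then lifts to $\cO$ by projectivity. Your route is a little more direct and avoids a second projectivity lift; the paper's is more uniform with part (i) and avoids having to discuss non-standard maximal unipotents and regular characters in $L_J$. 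One small misattribution: the ``secondary technical point'' you flag (that the conjugated character is regular on a suitable maximal unipotent of $L_J$) is not handled by the Strong Conjugacy Theorem, which concerns $R_I^S(X)\cong R_J^S({}^n\!X)$; it follows simply because conjugation by $n$ is an isomorphism $L_I\to L_J$ carrying maximal unipotent subgroups to maximal unipotent subgroups, and then uniqueness of the Gelfand--Graev module (connected centre, \cite[8.1.2]{Ca2}) is what lets you discard the choice of Borel.
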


\begin{proof} (i) By a result of Rodier (\cite[8.1.5]{Ca2}), 
we have $\Rst_J^S(K\Gamma_{\sigma})\cong K\Gamma_{\sigma}^J$; by 
\cite[(71.6)]{CR2}, we also have $\Rst_J^S(\St_K)\cong \St_K^J$. So (i)
follows by a standard argument;  see, e.g., \cite[5.14, 5.15]{my98}.

(ii) Since $\Rst_J^S(K\Gamma_{\sigma})\cong K\Gamma_{\sigma}^J$, it
is straightforward to show that ${^n\!K\Gamma}_{\sigma}^I \cong 
K\Gamma_{\sigma}^J$, using the ``Strong Conjugacy Theorem''. So we 
also have ${^n\Gamma}_{\sigma}^I \cong \Gamma_{\sigma}^J$ as 
$\cO L_J$-modules (since these modules are projective). This then implies 
(ii) by the construction of $\cS_{\sigma}^J$ and $D_{\sigma}^J$.\qed
\end{proof}

\begin{abs}  \label{sthc1a} Let $\cP_\sigma^*$ be the set of all subsets
$J\subseteq S$ such that $D_{\sigma}^J$ is a cuspidal $kL_J$-module. 
For $J \in \cP_\sigma^*$, we denote by $\Irr_k(G\mid J,\sigma)$ the set of 
all $Y \in \Irr_k(G)$ such that $Y$ is isomorphic to a submodule of 
$R_J^S(D_{\sigma}^J)$. By the ``Strong Conjugacy Theorem'', this is a 
Harish-Chandra series as defined by Hiss \cite{hiss2}. Hence, by 
\cite[Theorem~5.8]{hiss2} (see also \cite[\S 3]{my98}), every simple 
submodule of $R_J^S(D_{\sigma}^J)$ is isomorphic to a factor module of 
$R_J^S(D_{\sigma}^J)$, and vice versa. Furthermore, using also 
Lemma~\ref{sthc0}(ii), we have for any $I,J\in \cP_\sigma^*$:
\begin{alignat*}{2}
\Irr_k(G\mid I,\sigma)&=\Irr_k(G\mid J,\sigma) &\qquad &\mbox{if
$J=wIw^{-1}$ for some $w\in W$},\\
\Irr_k(G\mid I,\sigma) &\cap \Irr_k(G\mid J,\sigma) =\varnothing 
& \qquad &\mbox{otherwise}.
\end{alignat*}
\end{abs}

Thus, having fixed a regular character $\sigma\colon U\rightarrow K^\times$
as in \ref{gggr}, the above constructions produce composition factors of 
$kG\fb$ arising from subsets $J\subseteq S$. The following two results are 
adaptations of Dipper--Gruber \cite[Cor.~2.24 and 2.40]{digr} to the 
present setting. 

\begin{prop} \label{sthc2} Let $J\in \cP_\sigma^*$. Then $\St_k$ has a unique 
composition factor which belongs to the series $\Irr_k(G\mid J,\sigma)$. 
\end{prop}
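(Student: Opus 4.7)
The plan is to determine the multiplicity of $D_\sigma^J$ as a composition factor of $\Rst_J^S(\St_k)$ in two different ways and compare. First, I would compute this multiplicity directly. By the compatibility of Harish-Chandra restriction with modular reduction, $\Rst_J^S(\St_k)\cong \overline{\Rst_J^S(\St_\cO)}$; moreover, $\Rst_J^S(\St_\cO)$ is an $\cO L_J$-lattice whose generic fibre is $\St_K^J$ (a classical fact, recorded inside the proof of Lemma~\ref{sthc0}(i)). Since $\cS_\sigma^J\cong \Rst_J^S(\cS_\sigma)$ by Lemma~\ref{sthc0}(i) is another $\cO L_J$-lattice with the same generic fibre, the Brauer--Nesbitt theorem gives that $\overline{\Rst_J^S(\St_\cO)}$ and $\overline{\cS}_\sigma^J$ have the same $k$-characters. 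Applying Proposition~\ref{twohiss} inside $L_J$ to $D_\sigma^J$ then yields $[\overline{\cS}_\sigma^J:D_\sigma^J]=1$, hence $[\Rst_J^S(\St_k):D_\sigma^J]=1$.

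Second, I would read off the same number from a composition series of $\St_k$. Exactness of $\Rst_J^S$ gives
\[ 1 \;=\; [\Rst_J^S(\St_k):D_\sigma^J] \;=\; \sum_{Y}[\St_k:Y]\cdot[\Rst_J^S(Y):D_\sigma^J], \]
the sum running over isomorphism classes of simple $kG$-modules. The contribution of any $Y\in \Irr_k(G\mid J,\sigma)$ is already nonzero: such a $Y$ embeds in $R_J^S(D_\sigma^J)$, and the adjunction between $R_J^S$ and $\Rst_J^S$ then produces a nonzero map $\Rst_J^S(Y)\to D_\sigma^J$, placing $D_\sigma^J$ in the head of $\Rst_J^S(Y)$, so $[\Rst_J^S(Y):D_\sigma^J]\geq 1$.

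The main obstacle will be the converse: if $[\Rst_J^S(Y):D_\sigma^J]\neq 0$ for a simple $kG$-module $Y$, then $Y\in \Irr_k(G\mid J,\sigma)$. This is an instance of the uniqueness, up to $W$-conjugacy, of the modular cuspidal support of $Y$, and it is exactly what underlies the disjointness of Harish-Chandra series recorded in~\ref{sthc1a}. I would establish it by combining cuspidality of $D_\sigma^J$ (so that $\Rst_I^J(D_\sigma^J)=0$ whenever $I\subsetneq J$) with the Mackey formula for $\Rst_J^S\circ R_I^S$ and the Strong Conjugacy Theorem, to exclude any contribution of $D_\sigma^J$ to $\Rst_J^S(Y)$ coming from a Harish-Chandra series other than the one associated with $(L_J,D_\sigma^J)$.

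Once this converse is granted, the displayed identity becomes a sum over $Y\in \Irr_k(G\mid J,\sigma)$ of non-negative integers totalling~$1$. Hence exactly one isomorphism class $Y$ in the series $\Irr_k(G\mid J,\sigma)$ occurs as a composition factor of $\St_k$, and for this $Y$ one has $[\St_k:Y]=[\Rst_J^S(Y):D_\sigma^J]=1$; this yields the uniqueness assertion of the proposition with, in fact, multiplicity one.
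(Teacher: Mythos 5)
Your first paragraph is correct and is a clean repackaging of what the paper does: the identification $\Rst_J^S(\cS_\sigma)\cong\cS_\sigma^J$ together with Brauer--Nesbitt gives $[\Rst_J^S(\St_k):D_\sigma^J]=1$, and this is the source of the same ``$=1$'' that the paper extracts from $\dim\Hom_{KL_J}(K\Gamma_\sigma^J,K\cS_\sigma^J)=1$. The problem is the converse you flag as the ``main obstacle'': it is false, not merely hard. Cuspidality of $D_\sigma^J$ tells you that $D_\sigma^J$ cannot occur in the socle or the head of $R_{I'}^J(X')$ for $I'\subsetneq J$ (this is just adjointness), but it does \emph{not} prevent $D_\sigma^J$ from being a ``middle'' composition factor of such an induced module, and in the modular situation this happens all the time. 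For instance, with $G=\GL_2(q)$ and $\ell\mid q+1$, the cuspidal module $D_\sigma$ is a composition factor of $kG\fb=R_\varnothing^G(k_H)$ (the Loewy series is $k_G/D_\sigma/k_G$). Pushing this one step up, take $G=\GL_3(q)$ with $\ell\mid q+1$ and $J$ with $L_J\cong\GL_2(q)\times\GL_1(q)$: a short bookkeeping with Mackey and the decomposition numbers gives $[\Rst_J^S(kG\fb):D_\sigma^J]=3$, while $[kG\fb:D_{(3)}]=4$, $[kG\fb:D_{(2,1)}]=2$, $[kG\fb:D_{(1^3)}]=1$ and $[\Rst_J^S(D_{(1^3)}):D_\sigma^J]=1$; forcing $[\Rst_J^S(D_{(2,1)}):D_\sigma^J]=1$ even though $D_{(2,1)}\in\Irr_k(G\mid B)$ lies in a \emph{different} Harish-Chandra series. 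So your converse fails, and with it the step that turns ``$\leq 1$'' into ``$=1$''.

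Notice also what this means structurally: your displayed identity $1=\sum_Y[\St_k:Y][\Rst_J^S(Y):D_\sigma^J]$ does give the ``at most one'' half (each $Y\in\Irr_k(G\mid J,\sigma)$ occurring in $\St_k$ contributes at least $1$), but the ``at least one'' half is exactly what the false converse was supposed to deliver, since without it the whole contribution could come from composition factors of $\St_k$ outside the series. The paper sidesteps both issues by a projectivity argument: $\Hom_{kG}(\overline{\cS}_\sigma,R_J^S(D_\sigma^J))\neq 0$ by adjointness gives at least one composition factor in $\Irr_k(G\mid J,\sigma)$, while the surjection $R_J^S(\overline{\Gamma}_\sigma^J)\twoheadrightarrow R_J^S(D_\sigma^J)$ from the projective Gelfand--Graev module shows every $Y\in\Irr_k(G\mid J,\sigma)$ is a quotient of $R_J^S(\overline{\Gamma}_\sigma^J)$, so the total multiplicity in $\overline{\cS}_\sigma$ is bounded by $\dim\Hom_{kG}(R_J^S(\overline{\Gamma}_\sigma^J),\overline{\cS}_\sigma)=1$. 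That use of projectivity is precisely what replaces (and cannot be replaced by) cuspidal-support bookkeeping in the modular setting; you should adopt it.
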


\begin{proof} First note that $\St_K\cong K\cS_\sigma$ and so, by
a classical result of Brauer--Nesbitt (see \cite[(16.16)]{CR1}), $\St_k$ 
and $\overline{\cS}_\sigma$ have the same composition factors (counting 
multiplicities). Using Lemma~\ref{sthc0}(i) and adjointness, we obtain 
\[\Hom_{kG}\bigl(\overline{\cS}_\sigma,R_J^S(D_{\sigma}^J)\bigr)\cong
\Hom_{kL_J}\bigl(\Rst_J^S(\overline{\cS}_\sigma),D_{\sigma}^J\bigr)\cong
\Hom_{kL_J}(\overline{\cS}_{\sigma}^J,D_{\sigma}^J)\neq \{0\}.\]
Hence, some simple submodule of $R_J^S(D_{\sigma}^J)$ will be 
isomorphic to a composition factor of $\overline{\cS}_\sigma$ and so the 
latter module has at least one composition factor which belongs to  
$\Irr_k(G\mid J,\sigma)$. On the other hand, since $D_{\sigma}^J$ is 
a quotient of $\overline{\Gamma}_{\sigma}^J$, there exists a surjective
$kG$-module homomorphism $R_J^S(\overline{\Gamma}_{\sigma}^J) \rightarrow 
R_J^S(D_{\sigma}^J)$. Now $R_J^S(\overline{\Gamma}_{\sigma}^J)$ is 
projective (see, e.g., \cite[3.4]{my98}) and every simple module in 
$\Irr_k(G\mid J,\sigma)$ also is a quotient of $R_J^S(D_{\sigma}^J)$ (see 
\ref{sthc1a}). Hence, by standard results on projective modules, the 
total number of composition factors (counting multiplicities) of 
$\overline{\cS}_\sigma$ which belong to $\Irr_k(G\mid J,\sigma)$ is 
bounded above by 
\[\dim \Hom_{kG}\bigl(R_J^S(\overline{\Gamma}_{\sigma}^J), 
\overline{\cS}_\sigma\bigr)=\dim \Hom_{KG}\bigl(R_J^S(K\Gamma_{\sigma}^J),
K\cS_\sigma\bigr).\]
Using Lemma~\ref{sthc0}(i) and adjointness, the dimension on the right 
hand side evaluates to $\dim \Hom_{KL_J}(K\Gamma_{\sigma}^J,
K\cS_{\sigma}^J)$, which is one by \ref{twohiss1}. \qed
\end{proof}

\begin{prop} \label{sthc3} Assume that every composition factor of $kG\fb$ 
belongs to $\Irr_k(G\mid J,\sigma)$ for some $J\in \cP_\sigma^*$. Then the 
following hold.
\begin{itemize}
\item[{\rm (i)}] $\St_k$ is multiplicity-free and the length of a
composition series of $\St_k$ is equal to the number of $J\in \cP_\sigma^*$ 
(up to $W$-conjugacy).
\item[{\rm (ii)}] The induced map $\overline{\rho}_\sigma\colon 
\overline{\cS}_\sigma\rightarrow \St_k$ is an isomorphism and so
$\St_k/\rad(\St_k)\cong D_\sigma$. 
\item[{\rm (iii)}] All composition factors of $\rad(\St_k)$ are non-cuspidal.
\end{itemize}
\end{prop}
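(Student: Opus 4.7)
The plan is to derive part (i) directly from Proposition~\ref{sthc2} and the hypothesis, to prove part (ii) as the central assertion, and to obtain part (iii) as a corollary of (ii).

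First I would tackle \textbf{(i)}: since $\St_k\subseteq kG\fb$, every composition factor of $\St_k$ is one of $kG\fb$, and hence by hypothesis lies in some $\Irr_k(G\mid J,\sigma)$ with $J\in\cP_\sigma^*$. Proposition~\ref{sthc2} supplies exactly one composition factor of $\St_k$ per such series, and by~\ref{sthc1a} distinct $W$-conjugacy classes of $J$ yield disjoint series while conjugate ones coincide. So $\St_k$ is multiplicity-free with composition length equal to the number of $W$-conjugacy classes in $\cP_\sigma^*$.

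For \textbf{(ii)}, the starting point is Proposition~\ref{twohiss}, which provides the isomorphism $K\cS_\sigma\cong\St_K=K\St_\cO$. By Brauer--Nesbitt \cite[(16.16)]{CR1}, $\overline{\cS}_\sigma$ and $\St_k$ then share the same Brauer character, hence the same multiset of composition factors (each of multiplicity one by~(i)) and the same dimension $|U|$. From the proof of Proposition~\ref{twohiss}, $\overline{\cS}_\sigma$ is a quotient of the projective cover $\overline{P}_i$ of $D_\sigma$, hence has $D_\sigma$ as its unique simple quotient, and $\overline{\rho}_\sigma$ is non-zero. By the dimension equality, it suffices to show $\overline{\rho}_\sigma$ is surjective. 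I would argue this by contradiction: if $C=\St_k/\mathrm{image}(\overline{\rho}_\sigma)\neq 0$, choose a composition factor $Y\in\Irr_k(G\mid J,\sigma)$ of $C$; multiplicity-one from~(i) forces $Y$ to appear not in the image but in $\ker(\overline{\rho}_\sigma)$. Apply the exact Harish-Chandra restriction $\Rst_J^S$ and identify $\Rst_J^S(\overline{\cS}_\sigma)\cong\overline{\cS}_\sigma^J$ via Lemma~\ref{sthc0}(i). The embedding $Y\hookrightarrow R_J^S(D_\sigma^J)$ together with adjointness then exhibits $D_\sigma^J$ as a composition factor of $\Rst_J^S(Y)$ and hence of $\Rst_J^S(\ker(\overline{\rho}_\sigma))\subseteq\overline{\cS}_\sigma^J$. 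On the other hand, $\Rst_J^S(\mathrm{image}(\overline{\rho}_\sigma))$ is a non-zero quotient of $\overline{\cS}_\sigma^J$ coming from the short exact sequence of restrictions, so it inherits $D_\sigma^J$ as its head (the unique simple quotient of $\overline{\cS}_\sigma^J$), contributing a second copy of $D_\sigma^J$ inside $\overline{\cS}_\sigma^J$. Since $D_\sigma^J$ occurs with multiplicity one in $\overline{\cS}_\sigma^J$ (by the Levi-level analogue of~(i), using that $\overline{\cS}_\sigma^J$ and $\St_k^J$ share their composition factors), this is the desired contradiction.

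Part \textbf{(iii)} then follows from~(ii): by~(i) the only possible cuspidal composition factor of $\St_k$ is $D_\sigma$ itself (corresponding to $J=S\in\cP_\sigma^*$, i.e.\ $D_\sigma$ cuspidal), and~(ii) places $D_\sigma=\St_k/\rad(\St_k)$ in the head of $\St_k$, hence outside $\rad(\St_k)$. The hard part will be the contradiction in~(ii): one must verify that $\Rst_J^S(\mathrm{image}(\overline{\rho}_\sigma))$ is genuinely non-zero, that the two copies of $D_\sigma^J$ produced are distinct occurrences inside $\overline{\cS}_\sigma^J$, and that the Levi-level multiplicity-one property of $D_\sigma^J$ is available without circular reasoning~---~which will likely require an induction on the semisimple rank of $\bG$.
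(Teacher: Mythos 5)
Your parts (i) and (iii) match the paper's reasoning. For part (ii), the paper takes a different route: it invokes a criterion from Geck's earlier work ([my98, Theorem~5.16]) reducing injectivity of $\overline{\rho}_\sigma$ to the assertion ($\dagger$) that no $\overline{\cS}_\sigma^I$ with $I\neq\varnothing$ has a cuspidal simple submodule, and then establishes ($\dagger$) by classifying the resulting pair $(I,X)$ via the hypothesis and Hiss's Harish-Chandra series theory, eventually forcing $I=\varnothing$. Your argument instead proceeds by direct multiplicity counting after applying the exact functor $\Rst_J^S$ to the short exact sequence $0\to\ker\overline{\rho}_\sigma\to\overline{\cS}_\sigma\to M\to 0$, where $M$ is the image. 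This is a genuinely different and somewhat more self-contained route, avoiding the external criterion.

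Your worries at the end are, however, easier to dispose of than you suggest, and no induction on semisimple rank is required. First, $\Rst_J^S(M)\neq 0$ because $M\neq 0$ is a submodule of $\St_k$ and hence contains the simple socle $Y_0$ of $\St_k$, which by Theorem~\ref{othm2} lies in $\Irr_k(G\mid B)$; thus $\Rst_\varnothing^S(Y_0)\neq\{0\}$, and by transitivity $\Rst_J^S(Y_0)\neq\{0\}$ sits inside $\Rst_J^S(M)$. Second, the two copies of $D_\sigma^J$ are automatically distinct occurrences: they live respectively in the submodule $\Rst_J^S(\ker\overline{\rho}_\sigma)$ and the quotient $\Rst_J^S(M)$ of $\overline{\cS}_\sigma^J$, and composition multiplicities add across a short exact sequence. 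Third, and most importantly, the multiplicity-one statement for $D_\sigma^J$ in $\overline{\cS}_\sigma^J$ does not need any ``Levi-level analogue of (i)''; it follows from Proposition~\ref{twohiss} applied directly to $L_J$ (which is itself a finite group of Lie type with connected center, as noted in~\ref{gggr}), combined with Brauer--Nesbitt. Proposition~\ref{twohiss} is independent of the present proposition, so there is no circularity and no induction is needed. With these three points supplied, your argument is complete. It is worth noting that the paper's proof of ($\dagger$) also implicitly uses Proposition~\ref{twohiss} at the Levi level, so the underlying multiplicity-one input is the same; the difference is that the paper packages the reduction through the cited injectivity criterion, whereas you handle the comparison of $\overline{\cS}_\sigma$ and $\St_k$ by hand.
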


\begin{proof} (i) Since $\St_k\subseteq kG\fb$, the hypothesis applies, 
in particular, to the composition factors of $\St_k$. It remains to 
use Proposition~\ref{sthc2}.

(ii) By the proof of Proposition~\ref{twohiss}, we have 
$\overline{\rho}_\sigma\neq 0$. Hence, it is enough to show that
$\overline{\rho}_\sigma$ is injective. By \cite[Theorem~5.16]{my98}, this
is equivalent to the following statement.
\begin{itemize}
\item[($\dagger$)] If $I\subseteq S$ is such that $\overline{\cS}_{\sigma}^I$
has a cuspidal simple submodule, then $I=\varnothing$. 
\end{itemize}
Now ($\dagger$) is proved as follows. Let $X\subseteq 
\overline{\cS}_{\sigma}^I$ be a cuspidal simple submodule. Using 
Lemma~\ref{sthc0}(i) and adjointness, we obtain that 
\[\Hom_{kG} (R_I^S(X),\overline{\cS}_{\sigma})\cong \Hom_{kL_I}(X,
\overline{\cS}_{\sigma}^I)\neq \{0\}.\] 
Thus, $\overline{\cS}_\sigma$ has a composition factor which is a
quotient of $R_I^S(X)$. Since $\overline{\cS}_\sigma$ and $\St_k
\subseteq kG\fb$ have the same composition factors, it follows that 
$kG\fb$ has a composition factor which is a quotient of $R_I^S(X)$. 
By our assumption and the characterisation of Harish-Chandra series in 
\cite[Theorem~5.8]{hiss2}, the pair $(I,X)$ is $N$-conjugate to a pair 
$(J,D_{\sigma}^J)$ where $J\in\cP_\sigma^*$. So there exists some $n\in N$ 
such that $nL_In^{-1}=L_J$ and ${^n\!X}\cong D_{\sigma}^J$. Using 
Lemma~\ref{sthc0}(ii), we conclude that $X\cong 
D_{\sigma}^I$. Thus, $D_{\sigma}^I$ is both isomorphic to a 
submodule and to a quotient of $\overline{\cS}_{\sigma}^I$. Now, having 
a unique simple quotient, the module $\overline{\cS}_{\sigma}^I$ is 
indecomposable. Hence, the multiplicity~$1$ statement in 
Proposition~\ref{twohiss} implies that $D_{\sigma}^I\cong
\overline{\cS}_{\sigma}^I$ and, consequently, we also have $D_{\sigma}^I
\cong \St_k^I\subseteq kL_I\fb_I$. Thus, $kL_I\fb_I\cong R_\varnothing^I
(k_H)$ has a cuspidal simple submodule. Again, by \cite[Theorem~5.8]{hiss2}, 
this can only happen if $I=\varnothing$.

(iii) By our assumption, the only composition factor of $\St_k$ which can 
possibly be cuspidal is $D_\sigma$. By (ii) and Proposition~\ref{twohiss}, 
$D_\sigma$ is not a composition factor of $\rad(\St_k)$. \qed
\end{proof}

\begin{rem}\label{sthc3a} In analogy to Example~\ref{socgln}, we define
$\cU_k(G)$ to be the set of all $Y\in \Irr_k(G)$ which are composition
factors of $kG\fb$. We have $\Irr_k(G\mid B)\subseteq \cU_k(G)$ but note
that, in general, we neither have equality nor is $\cU_k(G)$ the complete 
set of all {\em unipotent $kG$-modules} as defined, for example, in 
\cite[\S 1]{ghm1}. (Over $K$, we do have at least $\Irr_K(G\mid B)=
\cU_K(G)$.) For $J\subseteq S$, we define $\cU_k(L_J)$ analogously; the 
standard Borel subgroup of $L_J$ is given by $B_J:=B\cap L_J$. Let $X\in 
\cU_k(L_J)$ and $Y\in\cU_k(G)$. Then we have:

(a) All composition factors of $R_J^S(X)$ belong to $\cU_k(G)$.

(b) If $\Rst_J^S(Y)\neq \{0\}$, then all composition factors of 
$\Rst_J^S(Y)$ belong to $\cU_k(L_J)$.
\end{rem}

\begin{proof} (a) By the definitions, we have $kG\fb\cong 
R_{\varnothing}^S(k_H)$ and, similarly, $kL_J\fb_J\cong R_{\varnothing}^J
(k_H)$, where $\fb_J= \sum_{b\in B\cap L_J} b\in kL_J$. Hence, using 
transitivity, we obtain $kG\fb\cong R_J^S(kL_J\fb_J)$. Since Harish-Chandra 
induction is exact (see \cite[3.4]{my98}), $R_J^S(X)$ is 
a subquotient of $kG\fb$. 

(b) Since $kG\fb\cong R_\varnothing^S(k_H)$, the Mackey formula immediately
shows that $\Rst_J^S(kG\fb)$ is a direct sum of a certain number of copies
of $kL_J\fb_J$. It remains to use the fact that Harish-Chandra restriction 
is also exact. \qed
\end{proof}

\begin{exmp} \label{stgln} Let $G=\GL_n(q)$, where $n\geq 1$ and $q$ is any
prime power. Let $e\geq 2$ be defined as in Example~\ref{socgln}; also 
recall that $|\cU_k(G)|=\pi(n)$, where $\pi(n)$ denotes the number of 
partitions of $n$. By \cite[7.6]{ghm1}, $D_\sigma$ is cuspidal if and 
only if $n=1$ or $n=e\ell^j$ for some $j\geq 0$. (Note that, if $\ell
\mid q-1$, then our $e$ equals $\ell$, while it equals $1$ in 
[{\em loc.\ cit.}]; otherwise, the two definitions coincide.) Now, the 
$W$-conjugacy classes of subsets $J\subseteq S$ are parametrised by the 
partitions $\lambda \vdash n$ (see \cite[2.3.8]{gepf}); the Levi subgroup
$L_J$ corresponding to $\lambda$ is a direct product of general linear 
groups corresponding to the parts of $\lambda$. Hence, the subsets $J\in
\cP_\sigma^*$ are parametrized by the partitions $\lambda\vdash n$ such that 
each part of $\lambda$ is equal to $1$ or to $e\ell^j$ for some $j\geq 0$. 
So Remark~\ref{sthc3a}(a) and the counting argument in \cite[(2.5)]{ghm2} 
yield $\pi(n)$ simple modules in $\cU_k(G)$ which belong to $\Irr_k(G\mid 
J, \sigma)$ for some $J\in \cP_\sigma^*$. Thus, the hypothesis of 
Proposition~\ref{sthc3} is satisfied in this case. Consequently, $\St_k/
\rad(\St_k)$ is simple and $\St_k$ is multiplicity-free. (This was also
shown by Szechtman \cite{Sz1}, using different techniques.) Furthermore, 
the composition length of $\St_k$ is the coefficient of $t^n$ in the power
series
\[ \frac{1}{1-t}\prod_{j\geq 0} \frac{1}{1-t^{e\ell^j}}.\]
Indeed, let $c_n$ denote the composition length of $\St_k$. By
Proposition~\ref{sthc3}(i), $c_n$ equals the number of $J\in\cP_\sigma^*$ 
(up to $W$-conjugacy). By the above disussion (see also \cite[(2.5)]{ghm2}), 
this is equal to the number of sequences $(m_{-1},m_0,m_1,\ldots)$ of 
non-negative integers such that $m_{-1}+ em_0+e\ell m_1+\cdots =n$ 
(where $\GL_0(q)=\{1\}$ by convention). We multiply both sides by $t^n$ 
and sum over all $n\geq 0$. This yields 
\[\sum_{n\geq 0} c_nt^n =\sum_{(m_{-1},m_0,m_1,\ldots)} t^{m_{-1}+e
m_0+e\ell m_1+\ldots}=\Bigl(\sum_{m_{-1}\geq 0} t^{m_{-1}}\Bigr)
\Bigl(\sum_{m_0\geq 0} t^{em_0} \Bigr)\Bigl(\sum_{m_1\geq 0} 
t^{e\ell m_1} \Bigr)\cdots.\]
Using the identity $1/(1-t^r)=\sum_{i \geq 0} t^{ri}$ for all $r\geq 1$, 
we obtain the desired formula.
\end{exmp}

\begin{rem} \label{remsz} In the setting of Szechtman \cite{Sz1}, the 
above expression for $c_n$ means that the formula (4) in 
\cite[p.~605]{Sz1} holds for all~$n$. (Previously, this was only known 
for $n\leq 10$; see the remarks in [{\em loc.\ cit.}].) This 
formula gives an explicit expression of the layers in the Jantzen 
filtration of $\St_k$ (as defined by Gow \cite{gow}), as direct sums 
of simple modules. It also shows that the layers in this filtration are 
not always irreducible and, hence, Gow's conjecture \cite[6.3]{gow} does 
not hold in general. See also the explicit examples in \cite[\S 9]{Sz1}.
\end{rem}

\begin{exmp} \label{stclass} Let $G=G_n(q)$, $n\geq 1$, be one of the 
following finite classical groups:
\begin{itemize}
\item[(1)] The general unitary group $\GU_n(q)$ for any $n$, any $q$.
\item[(2)] The special orthogonal group $\mbox{SO}_{n}(q)$ where $n=2m+1$ 
is odd and $q$ is odd. 
\item[(3a)] The symplectic group $\mbox{Sp}_{n}(q)$ where $n=2m$ is 
even and $q$ is a power of $2$. 
\item[(3b)] The conformal symplectic group $\mbox{CSp}_{n}(q)$ where
$n=2m$ is even and $q$ is odd. 
\item[(4)] The conformal orthogonal group $\mbox{CSO}_n^{\pm}(q)$ where
$n=2m$ is even and $q$ is odd. 
\end{itemize}
Each of these groups can be realized as $G=\bG^F$ where $\bG$ has a 
connected center and $\bG$ is simple modulo its center. By convention, 
$G_0(q)$ is the trivial group, except for the conformal groups, where it is 
cyclic of order $q-1$. We define the parameter $\delta$ to be $2$ in case 
(1) and to be $1$ in all the remaining cases. 
\end{exmp}

\begin{thm}[\protect{\cite{lymgh}}, Gruber \protect{\cite{grub}}, and 
Gruber--Hiss \protect{\cite{grhi}}] \label{grhi1} Let $G=G_n(q)$ be as in 
Example~\ref{stclass} and assume that $\ell$ is ``linear'', that is, 
$q^{\delta m-1} \not\equiv -1\bmod \ell$ for all $m\geq 1$.  
\begin{itemize}
\item[{\rm (i)}] We have $|\Irr_\C(W)|=|\cU_k(G)|$.
\item[{\rm (ii)}] If $Y\in \cU_k(G)$, then $\Rst_J^S(Y)\neq \{0\}$ for
some subset $J\subseteq S$ such that $L_J$ is a direct product of groups 
of untwisted type $A$.
\end{itemize}
\end{thm}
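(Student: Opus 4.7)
Since the theorem is attributed to \cite{lymgh}, \cite{grub}, and \cite{grhi}, my proof proposal amounts to describing the strategy by which those results combine into the two claimed statements.

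For part (i), the natural starting point is \ref{princ}: the ordinary unipotent principal series $\Irr_K(G\mid B)$ is in bijection with $\Irr_\C(W)$ via the Hom-functor correspondence ($\spadesuit$), and equals $\cU_K(G)$ (cf.\ Remark~\ref{sthc3a}). The task is therefore to show that $\ell$-modular reduction induces a bijection between $\cU_K(G)$ and $\cU_k(G)$. My plan is to produce a square, unitriangular decomposition matrix: take the cell modules of $\cH_\cO$ (indexed by $\Lambda$), transport them to $\cO G$-lattices via the Hom-functor correspondence, reduce modulo $\fp$, and show that the reduction of each cell module has a well-defined simple head $Y^\mu$ indexed by the \emph{full} set $\Lambda$ rather than a proper subset, with multiplicity matrix unitriangular for the $\ba$-function partial order. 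The linear prime hypothesis is precisely what forces $\Lambda_k^\circ = \Lambda$, i.e.\ that no cell module has trivial head. This analysis is carried out in \cite[\S 3]{lymgh} for $\GU_n(q)$ and extended to the remaining classical types in \cite{grub} and \cite{grhi}.

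For part (ii), my plan is to classify, by induction on the semisimple rank, the cuspidal simple modules inside each $\cU_k(L_J)$. By Remark~\ref{sthc3a}(b), cuspidality is detected by $\Rst$, and all composition factors encountered in the process remain unipotent, so the induction is well posed. The assertion reduces to showing that if $L_J$ has a factor that is \emph{not} of untwisted type~$A$ and of positive rank, then $\cU_k(L_J)$ contains no cuspidal modules; for such a component, it suffices to produce, for each candidate $Y \in \cU_k(L_J)$, an explicit proper Levi $L_{J'} \subsetneqq L_J$ for which $\Rst_{J'}^J(Y)$ is non-zero. The relevant bookkeeping uses the Mackey formula and the Dipper--James parametrisation of $\cU_k$ for the $\GL$-factors to read off such a non-zero restriction.

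The main obstacle~---~and the precise place where the linearity hypothesis is essential~---~is ruling out any exotic cuspidal unipotent simple module outside type~$A$. At a non-linear prime, an integer $m\geq 1$ with $q^{\delta m-1}\equiv -1\bmod\ell$ gives rise to genuinely new cuspidal unipotent simples supported on non-type-$A$ Levi subgroups; such modules would both enlarge $\cU_k(G)$ beyond $|\Irr_\C(W)|$ and would themselves violate~(ii). The technical heart of \cite{grub}, \cite{grhi} is the case analysis that excludes these under the linear hypothesis, combining the classification of cuspidal pairs via Deligne--Lusztig theory with the modular Hecke-algebraic framework of \cite{lymgh}.
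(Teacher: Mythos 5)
Your plan for part (i) has a genuine gap. You propose to establish $|\Irr_\C(W)|=|\cU_k(G)|$ by arguing that the linear prime hypothesis forces $\Lambda_k^\circ=\Lambda$, i.e.\ that every cell module of $\cH_k$ has a simple head. This is false and, more importantly, even if it were true it would prove the wrong thing: by the Hom-functor correspondence ($\spadesuit$), $|\Lambda_k^\circ|=|\Irr(\cH_k)|=|\Irr_k(G\mid B)|$, and as noted explicitly in Remark~\ref{sthc3a}, $\Irr_k(G\mid B)$ is in general a \emph{proper} subset of $\cU_k(G)$. The extra simple modules in $\cU_k(G)$ live in other Harish-Chandra series and are invisible to $\cH_k=\End_{kG}(kG\fb)^{\text{opp}}$. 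Concretely, for $G=\GU_n(q)$ with $\ell$ linear and $\ell\mid [G:B]$, the Hecke algebra is not semisimple, so $\Lambda_k^\circ\subsetneq\Lambda$, yet $|\cU_k(G)|=|\Irr_\C(W)|$ still holds because non-principal-series unipotent simples make up the difference. What is actually needed~---~and what the paper uses~---~is the block-diagonal (hence square) shape of the \emph{full} $\ell$-modular decomposition matrix restricted to unipotent characters, which is \cite[Theorem~8.2]{grhi}; this involves all Harish-Chandra series at once, not just the cell modules of $\cH_\cO$.

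For part (ii), your inductive strategy (rule out cuspidal unipotent simples on non-type-$A$ components by producing a smaller Levi with non-vanishing $\Rst$, using the Mackey formula) is plausible as a high-level description of what goes on inside \cite{grub}, \cite{grhi}, but it differs from the route the paper takes. The paper instead invokes \cite[Cor.~8.6]{grhi}: a projective cover $Q$ of $Y\in\cU_k(G)$ is a summand of $R_J^S(Q')$, where $L_J\cong G_a(q)\times L_\lambda$ with $L_\lambda$ a product of general linear groups and $Q'\cong Q_a'\otimes Q_\lambda'$, $KQ_a'$ having only cuspidal constituents. Adjointness gives $\Rst_J^S(Y)\neq\{0\}$; then Remark~\ref{sthc3a}(b) together with the projectivity of $Q'$ forces a cuspidal constituent of $KQ_a'$ to lie in $\cU_K(G_a(q))$, which only happens when $G_a(q)$ has $BN$-rank~$0$, so $L_J\cong L_\lambda$ is of untwisted type~$A$. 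Your version would require redeveloping what \cite[Cor.~8.6]{grhi} packages; as a blind reconstruction it points in the right direction but leaves the crucial classification of cuspidal unipotent pairs to the cited references without the decisive reduction to the structure of projective covers.
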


\begin{proof} This follows from \cite[\S 4]{lymgh} in the 
cases (1), (2), (3a), (3b), and from \cite{grub} in case (4). We shall
refer to the more general setting in \cite{grhi} (where all of $\Irr_k(G)$ is
considered). 

(i) Note that, by the ``classical fact'' in characteristic
$0$ mentioned in \ref{princ}, we certainly know that $|\Irr_\C(W)|=
|\cU_K(G)|$. Hence, the assertion immediately follows from the block 
diagonal shape of the decomposition matrix in \cite[Theorem~8.2]{grhi}.

(ii) Let $Q$ be a projective indecomposable $\cO G$-lattice such that 
$\overline{Q}$ is a projective cover of~$Y$. By \cite[Cor.~8.6]{grhi}, 
$Q$ is a direct summand of $R_J^S(Q')$, where $J\subseteq S$ and $Q'$ is 
a projective indecomposable $\cO L_J$-lattice such that the following
conditions hold. First, we have $L_J\cong G_a(q) \times L_\lambda$ where 
$n=a+2m$ ($a,m\geq 0$) and $G_a(q)$ is a group of the same type as $G$; 
furthermore, $\lambda$ is a composition of~$m$ and $L_\lambda$ is a 
direct product of general linear groups $\GL_{\lambda_i}(q^\delta)$ 
where $\lambda_i$ runs over the non-zero parts of $\lambda$. Finally, 
under the isomorphism $L_J\cong G_a(q) \times L_\lambda$, we have 
$Q'\cong Q_a' \otimes Q_\lambda'$ where $Q_a'$ is a projective 
indecomposable $\cO G_a(q)$-lattice such that $KQ_a'$ has only cuspidal 
constituents and $Q_\lambda'$ is an indecomposable direct summand of 
the Gelfand-Graev lattice for $\cO L_\lambda$.  

Now, since $\overline{Q}$ is a direct summand of $R_J^S(\overline{Q}')$,
we have $\Hom_{kL_J}(\overline{Q}',\Rst_J^S(Y))\neq \{0\}$ by adjointness. 
This shows, first of all, that $\Rst_J^S(Y) \neq \{0\}$. Using  
Remark~\ref{sthc3a}(b), we conclude that $\Hom_{kL_J}(\overline{Q}',
kL_J\fb_J)\neq 0$. Consequently, since $Q'$ is projective, we also have 
$\Hom_{KL_J}(KQ',KL_J\fb_J)\neq 0$. So, by the above direct product 
decomposition of $L_J$, at least one of the cuspidal composition factors 
of $KQ_a'$ belongs to $\cU_K(G_a(q))$. But this can only happen if 
$G_a(q)$ has $BN$-rank equal to~$0$. Thus, $L_J$ has the required form. \qed
\end{proof}

\begin{abs} \label{grhi2} Let $G=G_n(q)$ be as in Example~\ref{stclass}
and assume that $\ell$ is linear. By Theorem~\ref{grhi1}(ii) and the
characterisation of Harish-Chandra series in \cite[Theorem~5.8]{hiss2}, 
every $Y\in \cU_k(G)$ is a submodule of $R_J^S(X)$ where $J\subseteq S$ 
is such that $L_J$ is isomorphic to a direct product of groups of untwisted
type $A$, and $X\in \Irr_k(L_J)$ is cuspidal. Then, by adjointness, $X$ 
is a composition factor of $\Rst_J^S(Y)$ and, hence, $X\in \cU_k(L_J)$ by 
Remark~\ref{sthc3a}(b). But then the known results on 
general linear groups imply that $X\cong D_\sigma^J$; see, e.g., 
\cite[Cor.~6.16]{my98}. Thus, the hypothesis of Proposition~\ref{sthc3} is 
satisfied. (This is also mentioned in Dipper--Gruber \cite[4.22]{digr},
with only a sketch proof.)

Thus, in all the cases listed in Example~\ref{stclass}, $\St_k$ is 
multiplicity-free, $\St_k/\rad(\St_k)$ is simple and the composition 
length of $\St_k$ is determined as in Proposition~\ref{sthc3}(i). 
Consequently, one can derive a generating function for the composition 
length of $\St_k$, similar to that for $\GL_n(q)$ in Example~\ref{stgln}. 
We will only give the details for $G=\GU_n(q)$.  Write $n=2m$ (if $n$ is 
even) or $n=2m+1$ (if $n$ is odd); furthermore, since $\delta=2$, we set 
\[ \tilde{e}:=\min\{i\geq 2\mid 1+q^2+q^4+\cdots +q^{2(e-1)}\equiv 0
\bmod \ell\}\]
in this case. We can now use the counting argument in the proof of 
\cite[Theorem~4.11]{ghm2}; see also \cite[\S 4]{lymgh}. This shows 
that the subsets $J\in \cP_\sigma^*$ are parametrized (up to 
$W$-conjugacy) by the partitions $\lambda\vdash m$ such that each part 
of $\lambda$ is equal to $1$ or to $\tilde{e}\ell^j$ for some $j\geq 0$. 
So the number of $J\in\cP_\sigma^*$ (up to $W$-conjugacy) is equal to 
the number of sequences $(m_{-1},m_0,m_1, \ldots)$ of non-negative 
integers such that $m_{-1}+\tilde{e}m_0+ \tilde{e}\ell m_1+\cdots =m$. 
Thus, we find that the composition length of $\St_k$ for $G=\GU_n(q)$ is 
the coefficient of $t^m$ (and not $t^n$ as in Example~\ref{stgln}) in the 
power series
\[ \frac{1}{1-t}\prod_{j\geq 0} \frac{1}{1-t^{\tilde{e}\ell^j}}\qquad 
\mbox{(assuming that $\ell$ is linear for $G$)}.\]
\end{abs}

\begin{rem} \label{remdigr} Within the much more general setting of 
Dipper--Gruber \cite{digr}, we have considered here the ``projective 
restriction system'' $\cPR(X_G,Y_L)$ where
\[X_G:=\cS_\sigma, \quad L=H\quad \mbox{and}\quad Y_L=\cO H
\;\;\mbox{(regular $\cO H$-module)}.\]
In this particular case, the arguments in [{\em loc.\ cit.}] drastically 
simplify, and this is what we have tried to present in this section. We 
note, however, that these methods only yield quite limited information 
about $\St_k$ when $\ell$ is not a ``linear prime''. Only two of the 
composition factors in the socle series displayed in 
Example~\ref{soctrivrk1} are accounted for by these methods (namely, 
$k_G$, $\varphi_3$ for ${^2G}_2(q^2)$ and $k_G,\vartheta$ for 
$\GU_3(q)$); all the remaining composition factors are cuspidal. Also note
that, in these examples, $\St_k$ is not multiplicity-free.
\end{rem}

\smallskip
\noindent {\bf Acknowledgements.} I wish to thank Gerhard Hiss for
clarifying discussions about the contents of \cite{digr}, \cite{grub}, 
\cite{grhi}.


\end{document}